\documentclass{article}

\usepackage{amsfonts,amsmath,amssymb,amsthm}
\usepackage{amscd}
\usepackage{mathrsfs}
\usepackage[dvipsnames,table,xcdraw]{xcolor}
\usepackage[colorlinks = true,
            linkcolor = Fuchsia,
            urlcolor  = ForestGreen,
            citecolor = WildStrawberry,
            anchorcolor = blue]{hyperref}

\usepackage[all,2cell]{xy} \UseAllTwocells 
\usepackage{tikz}
\usetikzlibrary{cd}
\usetikzlibrary{arrows,automata}
\usetikzlibrary{decorations.markings}
\usetikzlibrary{decorations.pathmorphing,shapes}
\usepackage{enumitem}

\usepackage[english]{babel}

\usepackage[letterpaper,top=2cm,bottom=2cm,left=3cm,right=3cm,marginparwidth=1.75cm]{geometry}

\usepackage[style=numeric, backend=bibtex,url=false,maxnames=10]{biblatex}
\usepackage{csquotes}
\makeatletter
\AtEveryBibitem{%
  \ifnameundef{author}{}{%
    \iffieldequals{fullhash}{\bbx@lasthash}
      {\renewcommand{\printnames}[1]{\bibnamedash}}
      {\savefield{fullhash}{\bbx@lasthash}}%
  }%
}
\makeatother

\usepackage{amsmath}
\usepackage{graphicx}

\def\lra{\longrightarrow}

\def\Q{\mathbb Q}
\def\Z{\mathbb Z}

\newcommand{\id}{\mathsf{id}}
\newcommand{\Id}{\mathsf{Id}}

\newcommand{\Hom}{\mathsf{Hom}}

\newcommand{\Coh}{\mathsf{Coh}}
\newcommand{\slt}{\mathfrak{sl}(2)}
\newcommand{\mbGm}{\mathbb{G}_m}
\newcommand{\Sym}{\mathrm{Sym}}
\newcommand{\colim}{\mathrm{colim}}
\newcommand{\Int}{\mathrm{Int}}

\newcommand{\kk}{\mathbf{k}}

\newcommand{\mcO}{\mathcal{O}}
\newcommand{\mcL}{\mathcal{L}}
\newcommand{\mcC}{\mathcal{C}}
\newcommand{\mcS}{\mathcal{S}}
\newcommand{\mcF}{\mathcal{F}}
\newcommand{\mcA}{\mathcal{A}}

\newcommand{\mcU}{\mathcal{U}}

\newcommand{\mbs}{\mathbf{s}}
\newcommand{\wtR}{\widetilde{R}}
\newcommand{\wtK}{\widetilde{K}}
\newcommand{\PP}{\mathbb{P}}
\newcommand{\UzeroA}{\mcU^0_{\mcA}}
\newcommand{\UzeroAzero}{\mcU_{\mcA}^{\overline{0}}}
\newcommand{\UzeroAone}{\mcU_{\mcA}^{\overline{1}}}
\newcommand{\Rees}{\operatorname{Rees}^{(2)}_{\mathscr F}}

\newcommand{\bbinom}[2]{
  \begin{bmatrix}
    #1 \\
    #2
  \end{bmatrix}  
}

\newcommand{\bmat}[3]{
  \begin{bmatrix}
    #1 ; #2 \\
    #3
  \end{bmatrix}  
}

\newcommand{\oplusop}[1]{{\mathop{\oplus}\limits_{#1}}}

\theoremstyle{definition}
\newtheorem{thm}{Theorem}[section]
\newtheorem{lem}[thm]{Lemma}
\newtheorem{remark}[thm]{Remark}
\newtheorem{cor}[thm]{Corollary}
\newtheorem{prop}[thm]{Proposition}

\theoremstyle{definition}
\newtheorem{definition}[thm]{Definition}

\title{A categorification of the integral form of the Cartan subalgebra for quantum $\slt$}
\author{Mikhail Khovanov}

\date{August 15, 2026}

\begin{document}
\maketitle

\begin{abstract}
 We propose a categorification of the Lusztig integral form of the quantum Cartan subalgebra for $\slt$ via a  colimit of categories of equivariant coherent sheaves on finite-dimensional projective spaces. The nonequivariant version of our construction yields a nonsemisimple  abelian categorification of the ring of integer-valued polynomials. 
\end{abstract}

\tableofcontents

%%%%%%%%%%%%%%%%%%
%
% INTRODUCTION 
%
%%%%%%%%%%%%%%%%%%

\section{Introduction}

In a recent paper~\cite{KhCoherent}, a commutative algebra $R$ lifting the Lusztig integral form $\UzeroA$ of the Cartan subalgebra of quantum $\slt$ was categorified via the direct sum of categories of $\mbGm$-equivariant coherent sheaves on $\PP^n$, over all $n$. 

In the present paper, writing the torus $\mbGm$ as $T$, we form a suitable colimit $\Coh^T_{\PP}$ of the categories of coherent sheaves via exact functors 
\[
\Coh^{T}(\PP^n) \stackrel{\iota_n}{\lra} \Coh^{T}(\PP^{n+2}),
\]
which are pushforwards $\iota_n=(i_n)_{\ast}$ for (equivariant) inclusions 
\[
\PP^n \stackrel{i_n}{\lra} \PP^{n+2}. 
\] 
The colimit category $\Coh^{T}_{\PP}$ 
is abelian and carries a biexact convolution functor. The main result, given by Theorem~\ref{thm_abelianK0}, is the identification of the Grothendieck ring of  $\Coh^{T}_{\PP}$ with the Lusztig integral form of the Cartan. Lusztig elements $\bmat{K}{c}{n}$ lift to objects $\mathscr B_{c,n}$ of  $\Coh^{T}_{\PP}$, which are images of line bundles $\mcO(-c)$ on $\PP^n$'s, see \eqref{eq_Bscript_definition}. 
Guti\'errez-Mart\'inez-Szwej-Wildon relation in $\UzeroA$ given in~\cite[Theorem~1.2]{GMSW1}
is categorified by the direct sum decomposition of the pushforward in~\eqref{eq_Bscript_product}. A categorical lifting of comultiplication in $\UzeroA$ is provided as well. The problem of categorifying the integral form of the Cartan subalgebra for quantum $\slt$ was posed and studied in~\cite{MartinezRuiz2025}. 

In Section~\ref{sec_noneq} we consider the non-equivariant version of our construction and its relation to the ring of integer-valued polynomials and its Rees algebra. 

\vspace{0.07in}

 {\bf Acknowledgments.} 
 The author would like to acknowledge active discussions with ChatGPT 5.6 Sol while writing the paper. The author was partially supported by the Simons Collaboration Award 994328 ``New Structures in Low-Dimensional Topology''. 

%%%%%%%%%%%%%%%%%%%%%%%
%
% Introduction 
%
%%%%%%%%%%%%%%%%%%%%%%%

\section{Ring \texorpdfstring{$R$}{R} and the integral form of the Cartan subalgebra}

\subsection{Ring \texorpdfstring{$R$}{R}}
We recall some notations and results from~\cite{KhCoherent}. 
Let 
\[
\mcA \ := \ \Z[q,q^{-1}], 
\]
and let $R=\oplus_{n\ge 0}R_n$ be the commutative $\Z_+$-graded $\mcA$-algebra with generators $K_{c,n}$, over all $c\in \Z$, $n\in \Z_+$ and defining relations 
\begin{equation}\label{eq_q_binom}
    \sum_{k=0}^{n+1} (-1)^k \bbinom{n+1}{k} K_{c+k,n} =0, 
\end{equation}
for any $n\ge 0$ and $c\in \Z$, and 
\begin{equation}\label{eq_multiplication}
K_{c,n}K_{b,m} = \sum_{k \ge 0} \begin{bmatrix} n - c + b \\ k - c \end{bmatrix} \begin{bmatrix} m - b + c \\ k - b \end{bmatrix} 
K_{k,n+m}, 
\end{equation}
for $n,m\ge 0$ and $0\le c\le n$, $0\le b\le m$. The sum above is over $k$ such that $\max(b,c) \le k \le  \min(n+b, m+c)$. The grading is given by $\deg K_{c,n}=n$. We use balanced $q$-binomials:
\begin{equation}\label{eq_balanced}
\bbinom{n}{k}:=\frac{[n]!}{[k]![n-k]!}, \ \ [n]! :=[n][n-1]\dots [1], \ \ [n]=\frac{q^n-q^{-n}}{q-q^{-1}}\in \mcA=\Z[q,q^{-1}],  
\end{equation}
and the $q$-binomial equals $0$ if $k<0$ or $k>n$. 

Let $\mcS=\{(c,n)|0\le c\le n\}$. 
We write $K_{\mbs}, \mbs\in\mcS$ to denote generators from this set. Recall from~\cite{KhCoherent} that each graded piece $R_n$ is a free $\mcA$-module with a basis 
$(K_{0,n}, K_{1,n}, \dots, K_{n,n})$, and that $K_{\mbs}$, over all $\mbs\in \mcS$, is a basis of the free $\mcA$-module $R$. 

$R_1$ is spanned by
\begin{equation}\label{eq_A01}
A_0:=K_{0,1}, \ \   A_1:=K_{1,1}.
\end{equation}
Equation \eqref{eq_q_binom} for $n=1$ specializes to 3-term relations $K_{c+2,1}= [2]K_{c+1,1}- K_{c,1}$, which imply 
\begin{equation}\label{eq_Kc1}
K_{c,1}=[c]A_1-[c-1]A_0,  \ \ c\in \Z. 
\end{equation}
\noindent 

\begin{lem}
\label{lem:rationalpolynomial}
After extending scalars to $\Q(q)$ one has
\begin{equation}\label{eq_RQ_iso}
R_{\Q(q)}
:=
R\otimes_{\mcA}\Q(q)
\cong
\Q(q)[A_0,A_1]. 
\end{equation}
Furthermore,
\begin{equation}\label{eq_Kcn_prod}
K_{c,n}
=
\frac{1}{[n]!}
\prod_{r=c-n+1}^{c}
\bigl([r]A_1-[r-1]A_0\bigr)
\in R_{\Q(q)}, \  \ c\in\Z. 
\end{equation}
\end{lem}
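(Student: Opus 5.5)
The plan is to construct a ring homomorphism $\phi: R_{\Q(q)}\to \Q(q)[A_0,A_1]$ explicitly and then show it is an isomorphism. Write $L_r := [r]A_1-[r-1]A_0$, which are linear forms in the polynomial ring, and set
\[
P_{c,n} := \frac{1}{[n]!}\prod_{r=c-n+1}^{c} L_r \in \Q(q)[A_0,A_1], \qquad c\in\Z,\ n\ge 0 .
\]
We define $\phi(K_{c,n}) := P_{c,n}$. By \eqref{eq_Kc1}, $\phi(K_{c,1})=L_c$ is consistent with the identification of $R_1$. To see that $\phi$ is well defined, we verify the defining relations \eqref{eq_q_binom} and \eqref{eq_multiplication}.

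For \eqref{eq_q_binom}, we use the substitution that makes the $L_r$ multiplicative. Introduce $x,y$ (over a quadratic extension if needed) with $A_1 = \frac{x-y}{q-q^{-1}}$-type coordinates so that $L_r = \alpha q^{r}-\beta q^{-r}$ for suitable linear forms $\alpha,\beta$ in $A_0,A_1$. Concretely, $L_r=\frac{q^r(A_1-q^{-1}A_0)-q^{-r}(A_1-qA_0)}{q-q^{-1}}$. Then $P_{c,n}$, viewed as a function of $c$, is a Laurent polynomial in $q^{c}$ whose exponents lie in $\{-n,-n+2,\dots,n\}$, i.e.\ a combination of $n+1$ geometric sequences $q^{jc}$. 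The $(n+1)$-st $q$-difference operator $\sum_k(-1)^k\bbinom{n+1}{k}T^k$ factors as $\prod_{j}(1-q^{j}T)$ over $j\in\{-n,-n+2,\dots,n\}$, which is the standard $q$-binomial theorem for balanced binomials. Hence it annihilates each $q^{jc}$, and so it kills $P_{c,n}$.

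For \eqref{eq_multiplication}, both sides lie in the $(n+m+1)$-dimensional space of degree-$(n+m)$ forms. Since $\{L_r\}$ are pairwise non-proportional linear forms, a degree-$(n+m)$ form is determined by its values at the $n+m+1$ points $v_s$ where $L_s$ vanishes, $s=0,\dots,n+m$. We take $v_s=([s-1],[s])$ in $(A_1,A_0)$-coordinates, so that $L_r(v_s)=[r][s-1]-[r-1][s]=[r-s]$. Then $P_{c,n}(v_s)=\frac{1}{[n]!}\prod_{r=c-n+1}^{c}[r-s]$, which is a $q$-binomial $\pm\bbinom{c-s}{n}$-type quantity, vanishing exactly when $c-n+1\le s\le c$. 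Evaluating both sides of \eqref{eq_multiplication} at $v_s$ reduces the relation to a $q$-Vandermonde-type identity among balanced $q$-binomials. Alternatively, we can deduce it from the known basis and structure of $R$ in~\cite{KhCoherent}.

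\textbf{Main obstacle.} This verification is where the real work lies. The first thing to try is to check it at the special points where most terms vanish and to use triangularity: $P_{k,n+m}(v_s)$ is nonzero only for $s\notin[k-n-m+1,k]$. I expect this to be the hardest step.

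It remains to show that $\phi$ is bijective. Since $R_n$ is free with basis $K_{0,n},\dots,K_{n,n}$, it suffices to show that $P_{0,n},\dots,P_{n,n}$ form a basis of the degree-$n$ forms. By the evaluation above, $P_{c,n}(v_s)\neq 0$ for $0\le s\le n$ if and only if $s\notin[c-n+1,c]$, i.e.\ if and only if $s>c$. The matrix $(P_{c,n}(v_s))_{c,s}$ is therefore triangular with nonzero diagonal entries (the $s=c+1$ entries), and in fact with the $c=n$ row handled via $s=0$. After a suitable reindexing it is invertible, and since evaluation at $n+1$ distinct points $v_s$ is an isomorphism on degree-$n$ forms, the $P_{c,n}$ form a basis. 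Thus $\phi$ is an isomorphism in each degree. Finally, \eqref{eq_Kcn_prod} for all $c\in\Z$ holds in $R$ because the relations \eqref{eq_q_binom} propagate the values from $0\le c\le n$ uniquely, and $P_{c,n}$ satisfies the same recursion.
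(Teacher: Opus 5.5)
There is a genuine gap, and it sits exactly at the step you flagged as the main obstacle. Because you define $\phi$ on the presentation of $R$, its existence requires that the $P_{c,n}$ satisfy every instance of \eqref{eq_multiplication}, and you never show this. Evaluating at the points $v_s$ does not make the check routine. Under \eqref{eq_ReesAA}, $v_s$ is the image of $t=-1$, $z=q^{-s}$, so the condition at $v_s$ is the identity
\[
\bbinom{x+c}{n}\bbinom{x+b}{m}=\sum_{k}\bbinom{n-c+b}{k-c}\bbinom{m-b+c}{k-b}\bbinom{x+k}{n+m}
\]
evaluated at $x=-s$. Triangularity only removes the terms with $k\ge s$, so for $s\ge 2$ a genuine sum typically survives. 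Already $n=m=1$, $b=c=0$, $s=2$ gives $[2]^2=[3]+1$. Both sides are forms of degree $n+m$, so imposing the identity at the $n+m+1$ points $v_s$ is equivalent to the full Guti\'errez-Mart\'inez-Szwej-Wildon product formula \cite[Theorem~1.2]{GMSW1}, a nontrivial $q$-hypergeometric identity. Calling it ``$q$-Vandermonde-type'', or saying it can be ``deduced from \cite{KhCoherent}'', does not supply a proof. The rest of your argument is sound once $\phi$ exists: the factorization of the $q$-difference operator does give \eqref{eq_q_binom}, the evaluation matrix is invertible as you describe (note $L_r(v_s)=[s-r]$, not $[r-s]$), and the extension to all $c\in\Z$ is fine.

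The paper avoids this identity by reversing the direction of the map. The homomorphism $\Q(q)[A_0,A_1]\to R_{\Q(q)}$ needs no verification, since the source is free, so only surjectivity must be shown. For that the paper uses just the degree-one (Pieri) cases of \eqref{eq_multiplication}, as relations already valid in $R$:
\[
A_0K_{c,n}=[c+1]K_{c,n+1}+[n-c]K_{c+1,n+1},\qquad A_1K_{c,n}=[c]K_{c,n+1}+[n-c+1]K_{c+1,n+1}.
\]
These are extended from $0\le c\le n$ to all $c\in\Z$ by checking that both sides obey the recurrence \eqref{eq_q_binom}. The combination $K_{c-n,1}=[c-n]A_1-[c-n-1]A_0$ then cancels the $K_{c+1,n+1}$ term and yields $K_{c-n,1}K_{c,n}=[n+1]K_{c,n+1}$. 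Induction on $n$ gives \eqref{eq_Kcn_prod}, so $R_{\Q(q)}$ is generated in degree one, and injectivity follows by a dimension count against the rank $n+1$ of $R_n$.

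If you want to keep your direction, you can close the gap by citing \cite[Theorem~1.2]{GMSW1}. The substitution $A_0\mapsto t\bbinom{K}{1}$, $A_1\mapsto t\bmat{K}{1}{1}$ is injective on $\Q(q)[A_0,A_1]$. Because $[r][K;1]-[r-1][K;0]=[K;r]$, it sends $P_{c,n}$ to $t^n\bmat{K}{c}{n}$, so \eqref{eq_multiplication} for the $P_{c,n}$ is $t^{n+m}$ times the GMSW formula. That route needs only that $K_{0,n},\dots,K_{n,n}$ span $R_n$, and it reproves their independence. The cost is importing exactly the hard identity that the paper's argument never needs.
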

For $n=1$, equation \eqref{eq_Kcn_prod} specializes to \eqref{eq_Kc1}, and, for $n=0$, to $K_{c,0}=1$. 

\begin{proof} Via~\eqref{eq_multiplication}, 
multiplication by $A_0,A_1$ gives the two degree-one multiplication formulae in $R$: 
\begin{align}
A_0 K_{c,n}
&=
[c+1]K_{c,n+1}
+
[n-c]K_{c+1,n+1},
\label{eq_PieriA0}\\
A_1 K_{c,n}
&=
[c]K_{c,n+1}
+
[n-c+1]K_{c+1,n+1}.
\label{eq_PieriA1}
\end{align}
These formulas hold for $0\le c\le n$ via \eqref{eq_multiplication}. To see that they hold for all integer $c$, let 
\[
F_c=[c+1]K_{c,n+1}+[n-c]K_{c+1,n+1}.
\]
Thus, \eqref{eq_PieriA0} asserts that $A_0K_{c,n}=F_c$.  The LHS  of \eqref{eq_PieriA0}, as a
sequence in $c$ of elements of $R_{n+1}$, satisfies \eqref{eq_q_binom}:
\[
\sum_{j=0}^{n+1}(-1)^j
\begin{bmatrix}n+1\\j\end{bmatrix}
A_0K_{c+j,n}=0.
\]
The sequence \(F_c\) satisfies the same recurrence.  Indeed, using
relation \eqref{eq_q_binom} for the degree $n+1$ elements \(K_{c,n+1}\), one has
\begin{equation*}
\sum_{j=0}^{n+1}(-1)^j
\begin{bmatrix}n+1\\j\end{bmatrix}F_{c+j}
\nonumber  =
[c+1]\sum_{j=0}^{n+2}(-1)^j
\begin{bmatrix}n+2\\j\end{bmatrix}
K_{c+j,n+1}
=0, 
\end{equation*}
where we used the identity
\begin{equation*}
\begin{bmatrix}n+1\\j\end{bmatrix}[c+j+1]
-
\begin{bmatrix}n+1\\j-1\end{bmatrix}[n-c-j+1]
=
[c+1]\begin{bmatrix}n+2\\j\end{bmatrix}.
\end{equation*}
Since $A_0K_{c,n}=F_c$ holds for $0\le c\le n$, the above argument implies that the equality holds for all $c\in \Z$. 
Equality \eqref{eq_PieriA1} can be established for all $c\in \Z$ in a similar fashion. 

\vspace{0.07in} 

We now use these formulas to prove the product expression for \(K_{c,n}\). Set 
\[
L_r=[r]A_1-[r-1]A_0=K_{r,1}.
\]
From \eqref{eq_PieriA0} and \eqref{eq_PieriA1}, now valid for arbitrary \(c\), we obtain
\begin{align*}
L_{c-n}K_{c,n}
={}&[c-n]A_1K_{c,n}-[c-n-1]A_0K_{c,n}\\
={}&
\bigl([c-n][c]-[c-n-1][c+1]\bigr)K_{c,n+1}+
\bigl([c-n][n-c+1]-[c-n-1][n-c]\bigr)
K_{c+1,n+1}.
\end{align*}
Coefficient at $K_{c+1,n+1}$ is zero, while
\[
[c-n][c]-[c-n-1][c+1]=[n+1].
\]
Therefore
\begin{equation}\label{eq_prod_KK}
K_{c-n,1}K_{c,n}=[n+1]K_{c,n+1},
\qquad c\in\Z.
\end{equation}
To check \eqref{eq_Kcn_prod} for any $c\in \Z$, use induction on $n$. 
For $n=1$, this is \eqref{eq_Kc1}. Assuming \eqref{eq_Kcn_prod} holds for $n$, 
relation \eqref{eq_prod_KK} gives
\begin{equation*}
K_{c,n+1}
=\frac{1}{[n+1]}K_{c-n,1}K_{c,n}
=\frac{1}{[n+1]!}
\prod_{r=c-n}^{c}
\bigl([r]A_1-[r-1]A_0\bigr),
\end{equation*}
which is the required formula for \(n+1\).

In particular, $R_{\Q(q)}$ is generated by its degree one elements $A_0,A_1$. Since  $R_n$ is a free $\mcA$-module of rank $n+1$, there can be no nontrivial relations on $A_0,A_1$, and the isomorphism \eqref{eq_RQ_iso} of commutative algebras holds. 
\end{proof}
Note that the natural homomorphism $R\lra R_{\Q(q)}$ is an inclusion. 

\vspace{0.07in} 

%%%%%%%%
% Lusztig form 
%%%%%%%%%

\subsection{Lusztig integral form \texorpdfstring{$\UzeroA$}{U(0,A)}} 

Consider the ring $\Q[q^{\pm 1},K^{\pm 1}]$ of Laurent polynomials in two commuting variables $q,K$ and localize it by inverting $q^n-q^{-n}$ for all $n>0$. Denote the resulting ring by $R'$. 
Let 
\begin{equation}\label{eq_Kc_one}
[K;c] := \frac{q^c K - q^{-c} K^{-1}}{q-q^{-1}} \in R'
\end{equation}
and define
\begin{equation}\label{eq_Kcn}
\bmat{K}{c}{n} := \frac{[K;c][K;c-1]\dots [K;c-n+1]}{[n]!}\in R'. 
\end{equation}
Here $c\in \Z$ is an integer and $n\ge 0$. 
We have 
\[
\bmat{K}{c}{0}=1, \ \forall c\in\Z, \ \mathrm{and} \ \mathrm{let} \  \bbinom{K}{n}:= \bmat{K}{0}{n}. 
\] 
Lusztig integral form $\UzeroA$ of the quantum Cartan subalgebra for $\slt$ is defined as the $\mcA$-subalgebra of $R'$ generated by $\bmat{K}{c}{n}$ over all $n\ge 0$, $c\in \Z$. This algebra contains the elements
\begin{equation}\label{eq_K_and_inverse}
K = \bmat{K}{1}{1} - q^{-1}\bbinom{K}{1} \  \mathrm{and} \ K^{-1}= \bmat{K}{1}{1} - q\bbinom{K}{1}. 
\end{equation}
A basis of $\UzeroA$ is given, for example, by 
\begin{equation}\label{eq_basis_U}
\Bigl\{ \bbinom{K}{n}: n\ge 0 \Bigr\} \cup \Bigl\{ \bmat{K}{1}{n}: n\ge 1 \Bigr\}, 
\end{equation}
see~\cite[Proposition~5.3]{GMSW1}. 
Recall from~\cite{KhCoherent}: 

\begin{prop}
There is a surjective $\mcA$-algebra homomorphism 
\begin{equation}\label{eq_psi}
\psi:R\lra \UzeroA
\end{equation}
given by 
\begin{equation}\label{eq_Kcn_int}
K_{c,n} \longmapsto 
\bmat{K}{c}{n}.
\end{equation}
\end{prop}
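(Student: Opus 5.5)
Since $R$ is presented by generators $K_{c,n}$ and the relations \eqref{eq_q_binom} and \eqref{eq_multiplication}, the map $\psi$ is a well-defined algebra homomorphism exactly when these relations hold among the elements $\bmat{K}{c}{n}\in R'$. The cleanest way to check this is to avoid verifying \eqref{eq_multiplication} coefficient by coefficient and instead use Lemma~\ref{lem:rationalpolynomial}.

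Define a $\Q(q)$-algebra homomorphism
\[
\Psi:\ \Q(q)[A_0,A_1]\lra R'\otimes_{\Q[q^{\pm1}]}\Q(q)
\]
by $A_0\mapsto \bmat{K}{0}{1}=[K;0]$ and $A_1\mapsto [K;1]$. This is possible because the source is a polynomial ring. Composing with the isomorphism \eqref{eq_RQ_iso} and the inclusion $R\hookrightarrow R_{\Q(q)}$ gives an $\mcA$-algebra map $R\to R'_{\Q(q)}$. We then compute the image of $K_{c,n}$ using \eqref{eq_Kcn_prod}. The key identity is
\[
[r][K;1]-[r-1][K;0]=[K;r],
\]
which follows by expanding both sides in $q^{\pm r}K^{\pm1}$. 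It is the same telescoping identity underlying \eqref{eq_Kc1} and the relation $[K;r+1]+[K;r-1]=[2][K;r]$. Given this identity, the product in \eqref{eq_Kcn_prod} maps to
\[
\frac{1}{[n]!}\prod_{r=c-n+1}^{c}[K;r]=\bmat{K}{c}{n}
\]
for every $c\in\Z$. In particular the map sends each generator $K_{c,n}$ into $\UzeroA\subset R'$. Because the $K_{\mbs}$ span $R$ over $\mcA$, the image of $R$ is the $\mcA$-subalgebra generated by these elements, which is contained in $R'$.

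One subtlety concerns where the map is defined. We need $R'\to R'_{\Q(q)}$ to be injective so that the map restricted to $R$ genuinely lands in $R'$. This holds because $R'$ is a localization of a domain and hence torsion-free. Alternatively, we can define $\psi$ directly on generators and deduce that the relations hold in $R'$, since they hold after base change and $R'$ embeds into $R'_{\Q(q)}$. In either formulation, the relations \eqref{eq_q_binom} and \eqref{eq_multiplication} are satisfied by the $\bmat{K}{c}{n}$, because they hold for the $K_{c,n}$ in $R\subset R_{\Q(q)}$ and the map is an algebra homomorphism.

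Surjectivity is immediate. By definition $\UzeroA$ is generated as an $\mcA$-algebra by the $\bmat{K}{c}{n}$ for all $c\in\Z$ and $n\ge0$, and each of these is $\psi(K_{c,n})$. The main step that requires care is the identity $[r][K;1]-[r-1][K;0]=[K;r]$. Once it is checked, it also shows that the product formula \eqref{eq_Kcn_prod} matches the definition \eqref{eq_Kcn} factor by factor.
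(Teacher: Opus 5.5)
Your proof is correct, but it takes a different route from the one the paper relies on. The paper does not reprove this proposition; it recalls it from \cite{KhCoherent}. The proof of Proposition~\ref{prop_R_Rees} shows the underlying argument: check the defining relations of $R$ directly on Lusztig's elements, namely the $q$-binomial relation \eqref{eq_q_binom} and the product rule \eqref{eq_multiplication}. For the elements $\bmat{K}{c}{n}$ the latter is exactly the Guti\'errez-Mart\'inez-Szwej-Wildon formula \cite[Theorem~1.2]{GMSW1}, and surjectivity is then immediate.

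You instead route everything through Lemma~\ref{lem:rationalpolynomial}. Since $R\otimes_{\mcA}\Q(q)$ is a polynomial ring in $A_0,A_1$ and $K_{c,n}$ is the product \eqref{eq_Kcn_prod} of the linear forms $[r]A_1-[r-1]A_0$, the single identity $[r][K;1]-[r-1][K;0]=[K;r]$ (which is correct) forces $K_{c,n}\mapsto\bmat{K}{c}{n}$ for all $c\in\Z$. Injectivity of $R'\to R'\otimes_{\Q[q^{\pm1}]}\Q(q)$ then brings the image back into $\UzeroA$.

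What your route buys is that the nontrivial $q$-identity of \cite{GMSW1} is not needed as input. It, and \eqref{eq_q_binom} for Lusztig's elements, come out as corollaries, because a ring map transports every relation of $R$. Your map is also visibly the $t=1$ specialization of the rational Rees model \eqref{eq_ReesAA}--\eqref{eq_ReesKcn} with $K=q^x$.

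The cost is that you need the full strength of the lemma, in particular the algebraic independence of $A_0,A_1$ in $R_{\Q(q)}$. That independence rests on $R_n$ being free of rank $n+1$, which comes from \cite{KhCoherent}. It is a real requirement: $[K;0]$ and $[K;1]$ are algebraically dependent in $R'$, since $\kappa=A_1^2-(q+q^{-1})A_0A_1+A_0^2$ maps to $1$. So your map $\Psi$ is well defined only because $R_{\Q(q)}$ is free on $A_0,A_1$, not merely generated by them. There is no circularity, since the lemma's proof uses only the structure of $R$ and never $\psi$.
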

In particular, 
\begin{equation}\label{eq_Kc1_psi}
\psi(K_{c,1})= [K;c] = \frac{q^c K - q^{-c} K^{-1}}{q-q^{-1}}. 
\end{equation}
The homomorphism $\psi$ is not injective. Indeed, for $n\ge 2$ and $c\in \Z$ the element
\begin{equation}\label{eq_kernel_el}
K_{c+2,n}-(q^n+q^{-n})K_{c+1,n}+K_{c,n}-K_{c,n-2} 
\end{equation}
is nonzero in $R$ but its image in $\UzeroA$ is $0$, see \cite[Proposition 6.1]{GMSW1}.

%%%%%%%%%%%%%%
% Rees algebra
%%%%%%%%%%%%%%

\subsection{Rees algebra realization of \texorpdfstring{$R$}{R}} 

We now realize $R$ as a Rees algebra for a
two-step (parity) filtration on $\UzeroA$.
For brevity, let 
\begin{equation}\label{eq_Bcn}
B_{c,n}:=\bmat{K}{c}{n}\in\UzeroA,
\qquad c\in\Z,\  n\geq 0,
\end{equation}
and define
\begin{equation}\label{eq_Fn_def}
\mathscr F_n:=
\sum_{c\in\Z}\mcA\,B_{c,n}\subset\UzeroA,
\qquad n\geq 0.
\end{equation}
Also set $\mathscr F_{-1}=\mathscr F_{-2}=0$.

The relation of \cite[Proposition~6.1]{GMSW1} can be written as
\begin{equation}\label{eq_GMSW_lowering}
B_{c,n-2}
=
B_{c+2,n}-(q^n+q^{-n})B_{c+1,n}+B_{c,n},
\qquad n\geq 2.
\end{equation}
Consequently, 
\begin{equation}\label{eq_parity_include}
\mathscr F_{n-2}\subset \mathscr F_n,
\end{equation}
so that the even terms
$\mathscr F_0\subset\mathscr F_2\subset\mathscr F_4\subset\cdots$
and the odd terms
$\mathscr F_1\subset\mathscr F_3\subset\mathscr F_5\subset\cdots$
form two parity-separated increasing filtrations by $\mcA$-submodules of $\UzeroA$. Note 
that 
\[
\mathscr{F}_n\cap \mathscr{F}_m = \begin{cases} \mathscr{F}_{\min(n,m)} & \mathrm{if} \ n\equiv m(\mathrm{mod}\ 2), \\
0 & \mathrm{otherwise}, 
\end{cases}
\]

Basis \eqref{eq_basis_U} and relation \eqref{eq_GMSW_lowering} give a more
explicit description of these submodules.  Namely,
\begin{equation}\label{eq_Fn_basis}
\mathscr F_n
=
\bigoplus_{\substack{0\le j\le n\\ j\equiv n\!\!\!\pmod 2}}
\mcA\, B_{0,j}
\ \oplus\
\bigoplus_{\substack{1\le j\le n\\ j\equiv n\!\!\!\pmod 2}}
\mcA\, B_{1,j}.
\end{equation}
In particular, $\mathscr F_n$ is a free $\mcA$-module of rank $n+1$.
Indeed, for $n\geq 1$ one has
\begin{equation}\label{eq_Fn_recursive}
\mathscr F_n
=
\mathscr F_{n-2}
\oplus \mcA B_{0,n}
\oplus \mcA B_{1,n},
\end{equation}
while $\mathscr F_0=\mcA\cdot 1$.
The multiplication formula \eqref{eq_multiplication}, or equivalently
its counterpart in $\UzeroA$, implies
\begin{equation}\label{eq_F_mult}
\mathscr F_n\,\mathscr F_m\subset \mathscr F_{n+m}.
\end{equation}
Thus, \eqref{eq_parity_include} and \eqref{eq_F_mult} define a
multiplicative parity filtration on $\UzeroA$.  
Algebra $\UzeroA$ is $\Z/2$-graded, by the parity of $n$ in the spanning set $B_{c,n}$:
\begin{equation}\label{eq_U_Z2grading}
\UzeroA=\UzeroAzero\oplus\UzeroAone,
\end{equation}
with 
\begin{eqnarray}
    \UzeroAzero = \cup_{n\ge 0}\mathscr{F}_{2n}, \ \ 
    \UzeroAone = \cup_{n\ge 0}\mathscr{F}_{2n+1}. 
\end{eqnarray}
Let $t$ be an indeterminate and define the corresponding
\emph{parity Rees algebra}
\begin{equation}\label{eq_parity_Rees}
\operatorname{Rees}^{(2)}_{\mathscr F}
:=
\bigoplus_{n\geq 0} t^n\mathscr F_n
\ \subset\ \UzeroA[t].
\end{equation}
The superscript $(2)$ records that the filtration is increasing
in steps of two.  If one reserves the term ``Rees algebra'' for an
ordinary filtration $\mathscr F_n\subset\mathscr F_{n+1}$, then
\eqref{eq_parity_Rees} may instead be called \emph{the parity Rees algebra}.  Notice
that $t\notin \Rees$, whereas $t^2\in \Rees$. 

\begin{prop}\label{prop_R_Rees}
There is a natural isomorphism of graded $\mcA$-algebras
\begin{equation}\label{eq_R_Rees}
\Theta:
R\xrightarrow{\ \cong\ }
\Rees,
\qquad
K_{c,n}\longmapsto t^n\bmat{K}{c}{n}.
\end{equation}
\end{prop}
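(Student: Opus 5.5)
We plan to define $\Theta$ by composing the surjection $\psi$ of \eqref{eq_psi} with the grading. Concretely, set $\Theta(x)=t^n\psi(x)$ for $x\in R_n$ and extend additively. Since $R=\oplus_n R_n$ is graded and $\psi$ is an $\mcA$-algebra homomorphism, $\Theta$ is an $\mcA$-algebra homomorphism $R\to\UzeroA[t]$: for $x\in R_n$, $y\in R_m$ we have $xy\in R_{n+m}$, so
\[
\Theta(xy)=t^{n+m}\psi(x)\psi(y)=\Theta(x)\Theta(y).
\]
Its image in degree $n$ is $t^n\psi(R_n)$. The space $R_n$ is spanned by the $K_{c,n}$, $c\in\Z$; indeed $K_{0,n},\dots,K_{n,n}$ already form a basis, and the other $K_{c,n}$ lie in their span by \eqref{eq_q_binom}. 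Hence $\psi(R_n)=\sum_c\mcA B_{c,n}=\mathscr F_n$ by definition \eqref{eq_Fn_def}. So $\Theta$ is a graded homomorphism onto $\Rees$, and it sends $K_{c,n}\mapsto t^nB_{c,n}$ as claimed. In particular, the fact that $\Rees$ is a subalgebra, i.e.\ \eqref{eq_F_mult}, comes out automatically.

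It remains to prove injectivity, which can be checked degree by degree. The restriction $\psi|_{R_n}:R_n\to\mathscr F_n$ is a surjection of $\mcA$-modules. The source $R_n$ is free of rank $n+1$, with basis $K_{0,n},\dots,K_{n,n}$ recalled from~\cite{KhCoherent}. The target $\mathscr F_n$ is free of rank $n+1$ by \eqref{eq_Fn_basis}. A surjection between free modules of the same finite rank over a commutative ring is an isomorphism: its matrix $M$ has a right inverse $N$ with $MN=I$, so $\det M$ is a unit and $M$ is invertible. Alternatively, tensor with $\Q(q)$ and compare dimensions, then use that $R_n$ is torsion-free. Either way, each graded piece maps isomorphically, so $\Theta$ is bijective.

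The main point needing care is \eqref{eq_Fn_basis}, i.e.\ that $\mathscr F_n$ is free of rank exactly $n+1$. I will take this as established in the text, via the basis \eqref{eq_basis_U} of~\cite[Proposition~5.3]{GMSW1} together with the lowering relation \eqref{eq_GMSW_lowering}.

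If one prefers not to rely on it, rank $\le n+1$ is clear, since $\psi|_{R_n}$ is onto from a rank $n+1$ module. For rank $\ge n+1$, show that the elements in \eqref{eq_Fn_basis} lie in $\mathscr F_n$: repeatedly apply \eqref{eq_GMSW_lowering} to get $B_{0,j},B_{1,j}\in\mathscr F_n$ for $j\equiv n \pmod 2$. These elements are part of the basis \eqref{eq_basis_U} of $\UzeroA$, hence linearly independent, and there are exactly $n+1$ of them. This gives the required lower bound and finishes the argument.
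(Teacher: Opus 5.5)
Your proof is correct and follows essentially the same route as the paper. You obtain the homomorphism from the fact that the $B_{c,n}$ satisfy the defining relations of $R$, packaged as $\Theta=t^n\psi$ on $R_n$ using homogeneity of the relations; each $\Theta_n\colon R_n\to t^n\mathscr F_n$ is surjective by the definition of $\mathscr F_n$, and it is an isomorphism because both sides are free $\mcA$-modules of rank $n+1$, with your determinant (or $\Q(q)$-dimension plus torsion-freeness) argument and your independent lower bound on the rank of $\mathscr F_n$ just making explicit details the paper leaves implicit.
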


\begin{proof}
The relation \eqref{eq_q_binom} is satisfied by the elements
$B_{c,n}$, and the multiplication relation \eqref{eq_multiplication}
is exactly the corresponding multiplication formula for Lusztig's
elements.  Hence \eqref{eq_R_Rees} defines a homomorphism of graded
$\mcA$-algebras.

By the definition of $\mathscr F_n$, the degree $n$ component $\Theta_n:R_n\longrightarrow t^n\mathscr F_n$  is surjective.  The source is a free $\mcA$-module of rank $n+1$, with
basis $K_{0,n},\dots,K_{n,n}$, while the target is a free $\mcA$-module
of rank $n+1$ by \eqref{eq_Fn_basis}.  Therefore $\Theta_n$ is an
isomorphism for every $n$, and hence so is $\Theta$.
\end{proof}

We next describe the Rees parameter intrinsically inside $R$.  Set
\begin{equation}\label{eq_kappas}
\kappa_+:=A_1-q^{-1}A_0=K_{1,1}-q^{-1}K_{0,1},
\qquad
\kappa_-:=A_1-qA_0=K_{1,1}-q K_{0,1},
\qquad
\kappa:=\kappa_+\kappa_-.
\end{equation}
Then 
\begin{equation}\label{eq_kappa_expanded}
\kappa
=
A_1^2-(q+q^{-1})A_0A_1+A_0^2=K_{0,2}-(q^2+q^{-2})K_{1,2}+ K_{2,2}. 
\end{equation}
Notice that this equality is the $n=2,c=0$ case of the homogeneous GMSW relation 
\[
K_{c+2,n}-(q^n+q^{-n})K_{c+1,n}+K_{c,n} = \kappa K_{c,n-2}. 
\]

Using \eqref{eq_K_and_inverse}, or directly from the definitions, we
obtain
\begin{equation}\label{eq_theta_kappas}
\Theta(\kappa_+)=tK, \qquad
\Theta(\kappa_-)=tK^{-1}, \qquad
\Theta(\kappa)=t^2.
\end{equation}
In particular,
\begin{equation}\label{eq_psi_kappas}
\psi(\kappa_+)=K, \qquad
\psi(\kappa_-)=K^{-1}, \qquad \psi(\kappa)=1.
\end{equation}
The first two equalities in \eqref{eq_psi_kappas} are given by 
\cite[Remark~5.4]{GMSW1}.

The map $\psi$ now has a simple interpretation.  Evaluation at $t=1$
defines a surjective homomorphism
\begin{equation}\label{eq_eval_one}
\operatorname{ev}_{t=1}:
\Rees \longrightarrow \UzeroA, \qquad t^n f\longmapsto f,
\end{equation}
and
\begin{equation}
    \psi \ = \ \operatorname{ev}_{t=1} \circ \Theta, \ \ \psi: R\lra \UzeroA.  
\end{equation}

\begin{cor}\label{cor_kernel_psi}
The kernel of $\psi$ is the principal ideal generated by
$\kappa-1$.  Consequently,
\begin{equation}\label{eq_U_as_quotient}
\UzeroA\cong R/(\kappa-1).
\end{equation}
\end{cor}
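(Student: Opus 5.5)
Via the isomorphism $\Theta$ of Proposition~\ref{prop_R_Rees}, the corollary becomes a statement about $\Rees$: the kernel of $\operatorname{ev}_{t=1}:\Rees\lra\UzeroA$ is the ideal generated by $t^2-1=\Theta(\kappa-1)$, using \eqref{eq_theta_kappas}. Since $\psi=\operatorname{ev}_{t=1}\circ\Theta$ and $\Theta$ is an isomorphism of algebras, this is equivalent to $\ker\psi=(\kappa-1)$. Surjectivity of $\psi$ is already known, so \eqref{eq_U_as_quotient} then follows at once.

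The inclusion $(\kappa-1)\subset\ker\psi$ is immediate from $\psi(\kappa)=1$ in \eqref{eq_psi_kappas}. For the reverse inclusion, I work with homogeneous components inside $\Rees=\bigoplus_n t^n\mathscr F_n$. Let $x=\sum_{n=0}^N t^n f_n$ with $f_n\in\mathscr F_n$ lie in the kernel, so $\sum_n f_n=0$ in $\UzeroA$.

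The first step is to separate parities. By the $\Z/2$-grading \eqref{eq_U_Z2grading}, with $\mathscr F_n$ lying in the part of parity $n$, the even-indexed and odd-indexed sums vanish separately. Hence I may assume that all $n$ with $f_n\neq 0$ have the same parity.

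The second step is induction on the top degree $N$ of $x$. Define
\[
y \;=\; x-(t^2-1)\,t^{N-2}f_N .
\]
This makes sense inside $\Rees$ because $f_N\in\mathscr F_N$. What needs care is $t^{N-2}f_N$: it lies in $\Rees$ only if $f_N\in\mathscr F_{N-2}$, which need not hold. Instead I use the element $t^{N}f_N\cdot(1-t^{-2})$ differently, as follows.

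If $N\ge 2$, write $t^N f_N=(t^2-1)\,t^{N-2}g+t^{N-2}g'$; this requires controlling $f_N$ modulo $\mathscr F_{N-2}$. So I argue the other way round: the element
\[
x' \;=\; x+(t^2-1)\bigl(t^{N-2}f_N\bigr)
\]
is not available, and the cleaner route is to \emph{lower from the bottom}. Let $m$ be the lowest index with $f_m\neq 0$. Since $\mathscr F_m\subset\mathscr F_{m+2}$ by \eqref{eq_parity_include}, the element $t^{m+2}f_m$ lies in $\Rees$. Then
\[
x-(t^2-1)\,t^m f_m \;=\; t^{m+2}(f_m+f_{m+2})+\sum_{n>m+2}t^nf_n ,
\]
and $f_m+f_{m+2}\in\mathscr F_{m+2}$. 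This new element is still in the kernel, has the same top degree $N$, and has strictly fewer nonzero terms below $N$. Iterating, I reduce to a single term $t^Nf$ lying in the kernel. Then $f=0$, and so $x\in(t^2-1)$.

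The only genuine input is the nesting $\mathscr F_{n-2}\subset\mathscr F_n$ together with the parity splitting \eqref{eq_U_Z2grading}. The main point to check carefully is that the two parity pieces of $\UzeroA$ really do intersect trivially, i.e.\ that the decomposition \eqref{eq_U_Z2grading} is direct. This holds by the stated intersection formula for $\mathscr F_n\cap\mathscr F_m$, or by the basis \eqref{eq_Fn_basis}.

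Pulling the conclusion back along $\Theta$ gives $\ker\psi=(\kappa-1)$, and hence $\UzeroA\cong R/(\kappa-1)$.
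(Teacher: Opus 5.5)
Your argument is correct and is essentially the paper's: you reduce via $\Theta$ to showing $\ker(\operatorname{ev}_{t=1})=(t^2-1)$ in $\Rees$, split by parity using \eqref{eq_U_Z2grading}, and divide by $t^2-1$ using the nesting $\mathscr F_{n-2}\subset\mathscr F_n$. Your bottom-up lowering computes exactly the partial sums $f_m+f_{m+2}+\cdots$ that the paper names as the quotient coefficients.

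Some small repairs are needed. The displayed identity should read $x+(t^2-1)t^mf_m=t^{m+2}(f_m+f_{m+2})+\cdots$, not $x-(t^2-1)t^mf_m$; this is harmless, since the ideal is closed under negation. The iteration terminates because the lowest degree increases by $2$ at each step. It is not true that the number of nonzero terms below $N$ always drops: if $f_{m+2}=0$, that count stays the same. You should also delete the abandoned top-down attempt.
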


\begin{proof}
In view of the isomorphism $\Theta$, it is enough to prove that the kernel of
\eqref{eq_eval_one} is generated by $t^2-1$.  Split an element in this
kernel into its even and odd $t$-degree parts.  Since
$\UzeroA=\mcU_{\mcA}^{\overline 0}\oplus\mcU_{\mcA}^{\overline 1}$, the two
parts separately vanish at $t=1$.
For either parity, the coefficients lie in a nested chain
\[
\mathscr F_\epsilon\subset
\mathscr F_{\epsilon+2}\subset
\mathscr F_{\epsilon+4}\subset\cdots.
\]
Hence the usual division by $t^2-1$ stays inside the parity Rees
algebra: explicitly, the coefficients of the quotient are partial
sums of the coefficients of the original element, and the inclusions
above place these partial sums in the required $\mathscr F_n$.
Therefore
\[
\ker(\operatorname{ev}_{t=1})=(t^2-1).
\]
Now use \eqref{eq_theta_kappas}.
\end{proof}

The relation of \cite[Proposition~6.1]{GMSW1} has a homogeneous lift
in $R$.  Namely, for $n\geq 2$,
\begin{equation}\label{eq_homogeneous_GMSW}
K_{c+2,n}-(q^n+q^{-n})K_{c+1,n}+K_{c,n}
=
\kappa K_{c,n-2}.
\end{equation}
Indeed, after applying $\Theta$, equation
\eqref{eq_homogeneous_GMSW} is precisely
$t^n$ times \eqref{eq_GMSW_lowering}.  Specializing $\kappa$ to $1$
recovers the relation in $\UzeroA$.  In particular, the previously
displayed element of $\ker(\psi)$ is
\begin{equation*}
K_{c+2,n}-(q^n+q^{-n})K_{c+1,n}
   +K_{c,n}-K_{c,n-2} =(\kappa-1)K_{c,n-2}.
\end{equation*}

Finally, the rational polynomial model from
Lemma~\ref{lem:rationalpolynomial} is exactly the same Rees
construction after extending scalars to $\Q(q)$.  Formally write
$z=q^x$.  Then
\begin{equation}\label{eq_ReesAA}
A_0\longmapsto t[x],
\qquad
A_1\longmapsto t[x+1],
\end{equation}
and more generally
\begin{equation}\label{eq_ReesKcn}
K_{c,n}\longmapsto
t^n\bbinom{x+c}{n}.
\end{equation}
Moreover,
\begin{equation}\label{eq_Reeskappa}
\kappa_+\longmapsto tz,
\qquad
\kappa_-\longmapsto tz^{-1},
\qquad
\kappa\longmapsto t^2.
\end{equation}
After tensoring with $\Q(q)$ one has
\[
\mathscr F_n\otimes_{\mcA}\Q(q)
=
\bigoplus_{j=0}^{n}\Q(q)\,K^{\,n-2j},
\]
and therefore
\[
\Rees \otimes_{\mcA}\Q(q) \cong \Q(q)[tK,tK^{-1}],
\]
which is a polynomial algebra in the two algebraically independent
elements $tK$ and $tK^{-1}$.  This recovers  the isomorphism \eqref{eq_RQ_iso}.

 The quotient description \eqref{eq_U_as_quotient}
also explains the residual $\Z/2$-grading on Lusztig's integral form \eqref{eq_U_Z2grading} versus the $\Z$-grading on $R$, since  
$\deg\kappa=2$. The relation $\kappa=1$ merges the degrees of $R$ that differ by $2$. 

%%%%%%%%%%%%%%%%%%%%%%
%
% Equivariant coherent sheaves 
%
%%%%%%%%%%%%%%%%%%%%%%

\section{A colimit category for equivariant coherent sheaves on \texorpdfstring{$\PP^{n}$}{P(n)}}
\label{sec_colimit_cat}

\subsection{The balanced torus action and the stabilizing divisor}

{\bf Equivariant sheaves, immersions $i_n$ and skyscraper sheaves.}
Let $\kk$ be an algebraically closed field, and let
$T=\mbGm$ act on
\begin{equation} \label{eq_V}
V=\kk u_+\oplus\kk u_-
\end{equation}
with weights $+1$ and $-1$. We identify $
\PP^1=\PP(V)$, $[a:b]=\kk(a u_++b u_-)$, 
and 
\[
\PP^n=\PP(\Sym^n V) \cong \Sym^n(\PP^1).
\]
We use the equivariant linearization of $\mcO_{\PP^n}(1)$ for which
$H^0(\PP^n,\mcO(1))$ has character $
[n+1]=q^n+q^{n-2}+\cdots+q^{-n}.$

Recall from~\cite{KhCoherent} the natural identification of $\mcA$-modules 
\begin{equation}\label{eq_isoRn_PPn}
R_n \cong K^T(\PP^n) 
\end{equation}
between the weight $n$ homogeneous component of $R$ and the Grothendieck group of the category of $T$-equivariant coherent sheaves on $\PP^n$. Under this isomorphism element $K_{c,n}\in R_n$ is mapped to the class $[\mcO(-c)]$ of the line bundle $\mcO(-c)$ on $\PP^n$. 
The symmetrization (or addition) maps 
\begin{equation}\label{eq_fnm}
f_{n,m}: \PP^n\times \PP^m \lra \PP^{n+m}, \ \ (D,D') \longmapsto D+D' 
\end{equation}
are finite, flat maps of smooth algebraic varieties and they induce exact functors $(f_{n,m})_{\ast}$ and $f_{n,m}^{\ast}$ between categories of $T$-equivariant coherent sheaves: 
\begin{equation}\label{eq_maps_for_fnm}
\begin{tikzcd}[cramped, sep=large]
\Coh^T(\PP^n\times \PP^m) 
  \arrow[r, shift left=0.7ex, "{(f_{n,m})_*}"] 
& \Coh^T(\PP^{n+m}) 
  \arrow[l, shift left=0.7ex, "{(f_{n,m})^*}"]
\end{tikzcd}
\end{equation}
Summing over all $n$ and over all $n,m$ we define 
\[
\PP:=\sqcup_{n\ge 0} \PP^n, \ \  f: \PP\times\PP \lra \PP,  \ \  f=\sqcup_{n,m\ge 0}f_{n,m},
\]
viewing $\PP$ as a scheme which is a disjoint union of countably many smooth varieties. 
Form the direct sum of categories 
\[
\Coh^T(\PP) \ := \ \oplusop{n\ge 0} \Coh^T(\PP^n).
\]
This is an abelian symmetric monoidal category with the biexact tensor product (convolution) 
\begin{equation}\label{eq_convolution}
\mcF_1 \star \mcF_2 := (f_{n,m})_{\ast}(\mcF_1\boxtimes \mcF_2), 
\end{equation}
for equivariant coherent sheaves $\mcF_1,\mcF_2$ on $\PP^n$ and $\PP^m$, respectively. 
 This tensor bifunctor gives maps of $\mcA$-modules 
 \[
 K^T(\PP^n)\otimes K^T(\PP^m) \stackrel{[(f_{n,m})_{\ast}]}{\lra}
 K^T(\PP^{n+m})
 \]
 which correspond to the multiplication map on homogeneous components  $R_n\otimes_{\mcA} R_m\lra R_{n+m}$ 
under the isomorphism \eqref{eq_isoRn_PPn}. 
Summing over all $n$ and over all $n,m$ (for maps) gives an isomorphism of $\mcA$-algebras 
\begin{equation}\label{eq_iso_R_PP}
R \cong K^T(\PP) = \oplusop{n\ge 0}K^T(\PP^n),
\end{equation}
see~\cite{KhCoherent} for details. 

\vspace{0.07in} 

Let $ p_+=[1:0]=\PP(\kk u_+),\ \ p_-=[0:1]=\PP(\kk u_-)$
be the two fixed points of the action of $T$ on $\PP^1$. If $x_+,x_-$ are the dual homogeneous coordinates, then $\mathrm{wt}(x_+)=-1$, $\mathrm{wt}(x_-)=1$. 
The point $p_+$ is the zero locus of $x_-$, whose weight is $1$, giving us a short exact sequence 
\begin{equation}
\label{eq_fixedplus}
0\lra q\,\mcO_{\PP^1}(-1)
\stackrel{x_-}{\lra} \mcO_{\PP^1}
\lra \mcO_{p_+} \lra 0
\end{equation}
of $T$-equivariant sheaves on $\PP^1$. 
Here, $q^a\mcL$, for a $T$-equivariant sheaf $\mcL$, denotes the twist of $\mcL$ by the corresponding one-dimensional representation of $T$. 
Likewise, there is a short exact sequence 
\begin{equation}
\label{eq_fixedminus}
0\lra q^{-1}\,\mcO_{\PP^1}(-1)
\stackrel{x_+}{\lra} \mcO_{\PP^1}
\lra \mcO_{p_-} \lra 0
\end{equation}
for the point $p_-$. 
Write
\begin{equation}\label{eq_D0}
D_0=p_++p_-\in\PP^2.
\end{equation}
Addition of $D_0$ gives the closed immersion
\begin{equation}\label{eq_stabilize}
i_n:\PP^n\hookrightarrow\PP^{n+2}, 
\qquad D\longmapsto D+D_0.
\end{equation}

Under the isomorphism~\eqref{eq_isoRn_PPn}, elements $K_{0,1}$ and $K_{1,1}$ of $R_1$ map to the classes of bundles $\mcO_{\PP^1}$ and $\mcO_{\PP^1}(-1)$ (respectively) in the Grothendieck group. 
From equations~\eqref{eq_fixedplus}, \eqref{eq_fixedminus} and~\eqref{eq_kappas} we get 
\begin{eqnarray*}
[\mcO_{p_+}] & = & [\mcO_{\PP^1}]- q [\mcO_{\PP^1}(-1)]= K_{0,1}-q K_{1,1} = -q\kappa_+, \\   
 {[\mcO_{p_-}]} & = & 
[\mcO_{\PP^1}]- q^{-1} [\mcO_{\PP^1}(-1)]= K_{0,1}-q^{-1} K_{1,1} =-q^{-1}\kappa_-. 
\end{eqnarray*}
We have 
\[
\mcO_{p_+}\star\mcO_{p_-}\cong \mcO_{D_0},
\]
and therefore, using \eqref{eq_fixedplus}, \eqref{eq_fixedminus},
and \eqref{eq_kappas},
\[
[\mcO_{D_0}]
=
[\mcO_{p_+}][\mcO_{p_-}]
=
(-q\kappa_+)(-q^{-1}\kappa_-)
=
\kappa.
\]
Thus the class of the structure sheaf of the stabilizing divisor is
the element $\kappa\in R_2$.

\vspace{0.07in}

\noindent 
{\bf Categorical counterpart of the homogeneous GMSW relation~\eqref{eq_homogeneous_GMSW}.}
For every \(n\geq0\), let
\begin{equation}
\label{eq_iota_abelian}
\iota_n:=(i_n)_*:
\Coh^T(\PP^n)\lra\Coh^T(\PP^{n+2}).
\end{equation}
Since $i_n$ is a closed immersion, $\iota_n$ is an exact and fully
faithful functor between abelian categories.  Moreover, the restriction of
\[
f_{2,n}:\PP^2\times\PP^n\lra\PP^{n+2}
\]
to \(\{D_0\}\times\PP^n\) is \(i_n\), so there is a natural isomorphism
of exact functors
\begin{equation}\label{eq_iota_convolution}
\iota_n(\mcF)
\cong
\mcO_{D_0}\star\mcF.
\end{equation}
Consequently, on equivariant Grothendieck groups,
\begin{equation}\label{eq_iota_kappa_abelian}
[\iota_n(\mcF)]=\kappa[\mcF],
\end{equation}
so that the exact functor $\iota_n$ corresponds to the multiplication by $\kappa$ on the Grothendieck group via the isomorphism 
\eqref{eq_iso_R_PP}. 

Multiplication by
$u_+u_-$ identifies $\Sym^nV$ with the subspace of
$\Sym^{n+2}V$ spanned by the non-extreme weight vectors.  Hence
$i_n(\PP^n)\subset\PP^{n+2}$ is the common zero locus of the two
extreme-weight sections of $\mcO_{\PP^{n+2}}(1)$.  Denote these
sections by $z_+$ and $z_-$, with
\[
\operatorname{wt}(z_+)=n+2,
\qquad
\operatorname{wt}(z_-)=-n-2.
\]
They form a regular sequence.  Therefore, for every $c\in\Z$, the
equivariant Koszul resolution of
$(i_n)_*\mcO_{\PP^n}(-c)$ is
\begin{equation}\label{eq_Koszul_iota_abelian}
\begin{split}
0\lra {}&
\mcO_{\PP^{n+2}}(-c-2)
\stackrel{d_1}{\lra}
\left(
q^{n+2}\mcO_{\PP^{n+2}}(-c-1)
\oplus
q^{-(n+2)}\mcO_{\PP^{n+2}}(-c-1)
\right)
\\
&\stackrel{d_2}{\lra}
\mcO_{\PP^{n+2}}(-c)
\lra
(i_n)_*\mcO_{\PP^n}(-c)
\lra 0,
\end{split}
\end{equation}
where
\[
d_1=\begin{pmatrix} -z_- \\ z_+\end{pmatrix}
\qquad
d_2=
(z_+ , z_-).
\]
Passing to the Grothendieck group gives
\[
K_{c+2,n+2}
-(q^{n+2}+q^{-n-2})K_{c+1,n+2}
+K_{c,n+2}
=
\kappa K_{c,n},
\]
which is exactly \eqref{eq_homogeneous_GMSW} after replacing $n+2$
by $n$.  Thus, the homogeneous GMSW relation~\eqref{eq_homogeneous_GMSW} is  categorified
by the Koszul exact sequence \eqref{eq_Koszul_iota_abelian}.

%%%%%%%%%%%%%%%%%%%%%
% Abelian colimit category 
%%%%%%%%%%%%%%%%%%%%%

\subsection{The abelian colimit of coherent sheaves categories}

For $\epsilon\in\{0,1\}$ consider the sequence of abelian categories and functors 
\begin{equation}
\Coh^T(\PP^\epsilon)
\xrightarrow{(i_\epsilon)_*}
\Coh^T(\PP^{\epsilon+2})
\xrightarrow{(i_{\epsilon+2})_*}
\cdots.
\label{eq:abeliantelescope}
\end{equation}
Each functor $(i_{\epsilon+2m})_*$ is exact and fully faithful.

\begin{definition}\label{eq_def_colimit_cat}
Let
\[
\Coh^{\epsilon,T}_{\PP} =
\colim_r \Coh^T(\PP^{\epsilon+2r})
\]
be the colimit category for the directed system \eqref{eq:abeliantelescope}, and set 
\[
\Coh^T_{\PP} =
\Coh^{0,T}_{\PP}\oplus
\Coh^{1,T}_{\PP}.
\]
\end{definition}
For $n\geq 0$, let $\bar n\in\mathbb Z/2$ denote the parity of $n$.
We write
\begin{equation}\label{eq_jmath_n}
\jmath_n:
\Coh^T(\PP^n)
\lra
\Coh_{\PP}^{\bar{n},T}
\end{equation}
for the canonical functor from the category of equivariant coherent sheaves on $\PP^n$  to the 
colimit category and 
\begin{equation}\label{eq_jmath}
\jmath: \Coh^T(\PP) \lra \Coh_{\PP}^T, \ \ \jmath = \sum_{n\ge 0}\jmath_n 
\end{equation}
for the sum of these functors over all $n$.

This colimit category can be treated very concretely: its objects are (equivariant) coherent
sheaves on finite-dimensional projective spaces $\PP^{\epsilon+2m}$, where the morphisms between two objects are computed as follows. 
Assume that we are given (equivariant coherent) sheaves $\mcF_1$ and $\mcF_2$ on $\PP^{\epsilon+2m_1}$ and $\PP^{\epsilon+2m_2}$, respectively. 
There are corresponding objects $\jmath(\mcF_1)$ and $\jmath(\mcF_2)$ in $\Coh^T_{\PP}$. 
Assume, for example, that $m_1\le m_2$. Then 
\begin{equation}\label{eq_hom_stabilize}
\Hom_{\Coh^T_{\PP}}(\jmath(\mcF_1),\jmath(\mcF_2))\cong \Hom_{\Coh^T(\PP^{\epsilon+2 m_2})}(\widetilde{i}(\mcF_1),\mcF_2), 
\end{equation}
where $\widetilde{i}$ is the (equivariant) pushforward functor for the composition of the inclusions 
\[
\PP^{\epsilon+2m_1}\subset \PP^{\epsilon+2(m_1+1)}\subset \dots \subset \PP^{\epsilon+2m_2}. 
\]
The hom spaces are computed likewise if $m_1\ge m_2$. 

\begin{prop}
\label{prop_abeliancolimit}
The category $\Coh^T_{\PP}$ is abelian and the functors $\jmath_n$  above are exact and fully faithful. Functor $\jmath$ is exact and faithful. Hom spaces in $\Coh^T_{\PP}$ are finite-dimensional $\kk$-vector spaces, and they can be computed at finite stages as described in \eqref{eq_hom_stabilize}. 
\end{prop}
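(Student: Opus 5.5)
The plan is to use the standard fact that a filtered (here, sequential) colimit of abelian categories along exact, fully faithful functors is abelian, and to make this explicit through the concrete model \eqref{eq_hom_stabilize}. We treat each parity $\epsilon$ separately, since $\Coh^T_{\PP}$ is a direct sum and a finite direct sum of abelian categories is abelian. Fix $\epsilon$ and write $\mcC_r=\Coh^T(\PP^{\epsilon+2r})$ with transition functors $\iota=(i_{\epsilon+2r})_*$.

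The colimit is modeled as follows. Objects are pairs $(r,\mcF)$ with $\mcF\in\mcC_r$. Morphisms are
\[
\Hom((r_1,\mcF_1),(r_2,\mcF_2))=\colim_{s\ge r_1,r_2}\Hom_{\mcC_s}(\iota^{s-r_1}\mcF_1,\iota^{s-r_2}\mcF_2).
\]
Because every $\iota$ is fully faithful, the transition maps in this colimit of Hom spaces are bijections. The colimit therefore stabilizes at $s=\max(r_1,r_2)$, which is exactly \eqref{eq_hom_stabilize}. Pushing forward along a closed immersion into the larger space is fully faithful, so $\iota^{m}\mcF_2$ and $\mcF_2$ give the same Hom. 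Finite-dimensionality then follows because Hom spaces between coherent sheaves on projective varieties over $\kk$ are finite-dimensional, and $T$-invariants form a subspace. Full faithfulness of $\jmath_n$ is immediate from the same stabilization.

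For abelianness, take a morphism $\phi:(r_1,\mcF_1)\to(r_2,\mcF_2)$ and represent it at stage $s=\max(r_1,r_2)$ by a morphism in $\mcC_s$. Take its kernel, cokernel and image there, and declare these to be the kernel, cokernel and image in the colimit. Two things must be checked.
\begin{enumerate}
\item These are genuine kernels and cokernels in the colimit. Any test object $(r_3,\mcG)$ can be moved to stage $s'=\max(s,r_3)$. Since $\iota$ is exact, it preserves kernels and cokernels, so the universal property at stage $s'$ applies. Hom sets are computed at stage $s'$ by the previous paragraph.
\item The canonical map from coimage to image is an isomorphism. This holds because it already holds in the abelian category $\mcC_s$.
\end{enumerate}
Additivity is clear, since biproducts are computed at a common stage and preserved by the additive functors $\iota$.

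Exactness of $\jmath_n$ follows because kernels and cokernels in the colimit are, by construction, computed at any sufficiently large stage, and $\iota$ is exact. The functor $\jmath$ is a sum of the $\jmath_n$ on the direct sum $\Coh^T(\PP)$, so it is exact. It is faithful because each $\jmath_n$ is faithful and morphisms in $\Coh^T(\PP)$ between different summands are zero. However, $\jmath$ is not full: $\jmath_n(\mcF)$ and $\jmath_{n+2}(\iota_n\mcF)$ are isomorphic in the colimit, but they lie in distinct summands of the source.

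The main obstacle is the bookkeeping in step 1. One must verify that the choice of stage does not matter, that is, that the constructions are compatible with $\iota$ up to canonical isomorphism. This uses that $\iota$ preserves kernels and cokernels as subobjects and quotients. After that check, the remaining arguments are formal.
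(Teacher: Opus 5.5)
Your proposal is correct and follows essentially the same route as the paper. Both arguments represent objects and morphisms at a finite stage and use that the transition functors $(i_n)_*$ are exact and fully faithful, so Hom spaces, kernels and cokernels stabilize; abelianness, exactness and full faithfulness of $\jmath_n$, faithfulness of $\jmath$, and finite-dimensionality of Hom spaces then follow, and you simply spell out the universal-property and coimage--image checks that the paper's proof leaves implicit.
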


\begin{proof}
A morphism in the colimit category can be represented at a finite stage (on $\PP^{\epsilon+2m}$ for some $\epsilon$ and $m$) as explained above.
Because every $(i_n)_*$ is exact and fully faithful, kernels and
cokernels computed at that stage remain kernels and cokernels after
every further transition.  Consequently, kernels and cokernels are
well-defined in the filtered colimit, and the abelian category axioms hold in $\Coh^T_{\PP}$.  
Since all transition functors $\iota_n$ are exact and fully faithful,
the functors $\jmath_n$ are exact and fully faithful as well. 
Note that the functor $\jmath$ is exact and faithful but not full, since passing to the colimit leads to possible nonzero morphisms between the images of sheaves on $\PP^n$ and $\PP^m$ for $n\not= m$ of the same parity. 
Finally, by \eqref{eq_hom_stabilize}, every Hom space in
$\Coh^T_{\PP}$ is a Hom space between equivariant coherent sheaves
on a finite-dimensional projective space, thus it is finite-dimensional.
\end{proof}
 The above functors 
satisfy canonical natural isomorphisms
\begin{equation}\label{eq_jmath_stabilization}
\jmath_{n+2}\circ\iota_n
\cong
\jmath_n.
\end{equation}
In particular, a coherent sheaf and any of its further stabilizations define isomorphic objects of $\Coh^T_{\PP}$.
The functors $\jmath_n$ are compatible with character twists:
$\jmath_n(q^a\mcF)\cong q^a\jmath_n(\mcF)$.

\begin{remark}
Passing to the colimit respects all finite-stage categorical
relations: the functors $\jmath_n$ preserve and reflect isomorphisms,
and preserve exact sequences and direct-sum decompositions.  The new
identifications introduced in the colimit are precisely those coming
from the stabilization isomorphisms~\eqref{eq_jmath_stabilization}. 
\end{remark}

Recall that the maps $f_{n,m}:\PP^n\times\PP^m\longrightarrow\PP^{n+m}$ are  finite and flat~\cite[Proposition~3.1]{KhCoherent}, and the convolution 
\begin{equation}\label{conv_finite}
\mcF_1\star \mcF_2
:=
(f_{n,m})_*(\mcF_1\boxtimes \mcF_2)
\end{equation}
is exact in each variable on coherent sheaves
\cite[Proposition~3.2]{KhCoherent}.
The addition maps satisfy
\[
f_{n+2,m}\circ(i_n\times\Id)
=
i_{n+m}\circ f_{n,m},
\]
and likewise for the second variable.
Therefore
\begin{equation}
(i_n)_*\mcF_1\star \mcF_2
\cong
(i_{n+m})_*(\mcF_1\star \mcF_2),
\qquad
\mcF_1\star(i_m)_*\mcF_2
\cong
(i_{n+m})_*(\mcF_1\star \mcF_2).
\label{eq_multstabilization}
\end{equation}

\begin{prop}
\label{prop_abelianmonoidal}
Convolution extends to a biexact symmetric monoidal bifunctor on
$\Coh^T_{\PP}$:
\begin{equation}\label{eq_star_PP}
\Coh^T_{\PP}\times \Coh^T_{\PP} \stackrel{\star}{\lra} \Coh^T_{\PP}.
\end{equation}
Its unit is the image of
$\mcO_{\PP^0}$.
\end{prop}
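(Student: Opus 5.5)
The plan is to define $\star$ on the colimit stagewise and check that it is compatible with the transition functors, using the isomorphisms \eqref{eq_multstabilization}. Take objects $\jmath_n(\mcF_1)$ and $\jmath_m(\mcF_2)$ with $\mcF_1\in\Coh^T(\PP^n)$ and $\mcF_2\in\Coh^T(\PP^m)$. Set
\[
\jmath_n(\mcF_1)\star\jmath_m(\mcF_2):=\jmath_{n+m}(\mcF_1\star\mcF_2),
\]
which lies in $\Coh^{\overline{n+m},T}_{\PP}$. This is compatible with the parity decomposition, since $\Coh^{\bar a,T}_{\PP}\times\Coh^{\bar b,T}_{\PP}$ is sent to $\Coh^{\overline{a+b},T}_{\PP}$.

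To see that this is well defined on objects, replace $\mcF_1$ by $\iota_n\mcF_1$. By \eqref{eq_multstabilization} and \eqref{eq_jmath_stabilization},
\[
\jmath_{n+m+2}((i_n)_*\mcF_1\star\mcF_2)\cong\jmath_{n+m+2}\iota_{n+m}(\mcF_1\star\mcF_2)\cong\jmath_{n+m}(\mcF_1\star\mcF_2),
\]
and the second variable is handled the same way.

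For morphisms, I will use the description \eqref{eq_hom_stabilize}. A morphism $\jmath(\mcF_1)\to\jmath(\mcF_1')$ is represented at a common stage $\PP^N$ by a sheaf map $\tilde i\mcF_1\to\tilde i\mcF_1'$. Convolving it with a representative of the second object gives a map at stage $N+m$, and the isomorphisms \eqref{eq_multstabilization} identify the result with a morphism between the images of $\mcF_1\star\mcF_2$ and $\mcF_1'\star\mcF_2$.

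The main obstacle is coherence. The isomorphisms \eqref{eq_multstabilization} must be natural, and compatible with composites of stabilizations in both variables, so that the construction does not depend on the chosen stage. I would address this by deriving them from the strict equality of maps $f_{n+2,m}\circ(i_n\times\Id)=i_{n+m}\circ f_{n,m}$ together with the standard isomorphism $(i_n\times\Id)_*(\mcF_1\boxtimes\mcF_2)\cong(i_n)_*\mcF_1\boxtimes\mcF_2$. Both are canonical, so the resulting isomorphisms are canonical pseudonatural transformations. Their compatibility with iterated stabilization then reduces to the composition isomorphisms $(g\circ h)_*\cong g_*h_*$ for pushforwards, which satisfy the usual cocycle condition. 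With this in hand, the universal property of the 2-colimit of the filtered systems \eqref{eq:abeliantelescope}, applied in each variable, yields the bifunctor.

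Biexactness follows from the proof of Proposition~\ref{prop_abeliancolimit}. A short exact sequence in $\Coh^T_{\PP}$ is represented by a short exact sequence of sheaves at some finite stage, $\star$ is exact in each variable at finite stages by \cite[Proposition~3.2]{KhCoherent}, and $\jmath$ is exact.

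For the symmetric monoidal structure, the associativity and commutativity constraints at finite stages come from the identities $f\circ(f\times\Id)=f\circ(\Id\times f)$ and $f\circ\sigma=f$ for the addition maps, with $\sigma$ the swap. These are the data making $\Coh^T(\PP)$ symmetric monoidal. The constraints commute with the stabilization isomorphisms because $D_0$ is added in the same way regardless of bracketing or order, so all the relevant maps of varieties agree strictly. Hence the constraints descend to the colimit, and the pentagon and hexagon axioms hold because they hold stagewise.

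For the unit, $\PP^0$ is a point and $f_{0,n}$ is the canonical identification $\PP^0\times\PP^n\cong\PP^n$, so $\mcO_{\PP^0}\star\mcF\cong\mcF$ naturally. This isomorphism is compatible with $\iota_n$, so the image $\jmath_0(\mcO_{\PP^0})$ is a unit for $\star$ on $\Coh^T_{\PP}$.
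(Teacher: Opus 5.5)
Your proposal is correct and follows the paper's route. It uses \eqref{eq_multstabilization} for compatibility with stabilization, gets associativity and symmetry from divisor addition (which commutes strictly with adding $D_0$), and checks biexactness at finite stages; you also spell out the coherence of the stabilization isomorphisms and the unit isomorphism via $f_{0,n}$, which the paper's short proof leaves implicit.
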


\begin{proof}
Compatibility with the transition maps is
\eqref{eq_multstabilization}. 
Associativity and symmetry are inherited from addition of effective
divisors. Biexactness holds for the tensor product at finite stages~\eqref{eq_convolution}, see also~\cite{KhCoherent}, and immediately extends to the biexactness of the convolution on the colimit category.  
\end{proof}
In particular, for $\mcF_1\in\Coh^T(\PP^n)$ and
$\mcF_2\in\Coh^T(\PP^m)$ there is a canonical isomorphism
\begin{equation}\label{eq_jmath_convolution}
\jmath_n(\mcF_1)\star\jmath_m(\mcF_2)
\cong
\jmath_{n+m}(\mcF_1\star\mcF_2).
\end{equation}

\noindent 
{\bf The Grothendieck ring.}
\begin{lem}
\label{lem:gradedcolimit}
Let $S=\bigoplus_{n\geq0}S_n$ be a graded ring and let
$\eta\in S_d$ be central and homogeneous.
Then
\[
\bigoplus_{\epsilon\in\Z/d}
\colim_r
\left(
S_{\epsilon+rd}
\xrightarrow{\cdot\eta}
S_{\epsilon+(r+1)d}
\right)
\cong
S/(\eta-1)
\]
as a $\Z/d$-graded ring.
\end{lem}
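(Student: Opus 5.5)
The plan is to write down the obvious comparison map and check by hand that it is a ring isomorphism. The point that matters is that injectivity can be proved by comparing homogeneous components. This avoids any hypothesis that $\eta$ is a nonzerodivisor: torsion of $\eta$ is exactly what the colimit kills.

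\textbf{Step 1: the map and its ring structure.} Write $C_\epsilon=\colim_r S_{\epsilon+rd}$, with transition maps given by multiplication by $\eta$. Define
\[
\Phi:\bigoplus_{\epsilon\in\Z/d}C_\epsilon\lra S/(\eta-1),
\]
sending the class of $s\in S_{\epsilon+rd}$ to $s \bmod (\eta-1)$. This is well defined because $\eta s\equiv s$ modulo $(\eta-1)$.

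The ring structure on the colimit is $[s]\cdot[s']=[ss']$ for $s\in S_{\epsilon+rd}$ and $s'\in S_{\epsilon'+r'd}$. It is independent of representatives because $\eta$ is central: $(\eta s)s'=\eta(ss')$ and $s(\eta s')=\eta(ss')$. With this product, $\Phi$ is visibly a ring homomorphism.

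For the $\Z/d$-grading on the target, grade $S$ by degree mod $d$. Then $\eta-1$ is homogeneous of degree $\bar 0$, since $d\geq 1$ and $\eta\in S_d$. Hence $(\eta-1)$ is a graded ideal, and $S/(\eta-1)$ inherits the $\Z/d$-grading. The map $\Phi$ sends $C_\epsilon$ into the degree-$\epsilon$ part, so $\Phi$ preserves the grading.

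\textbf{Step 2: surjectivity.} Take any $s=\sum_n s_n\in S$; this is a finite sum. Group the components by residue class $\epsilon$ of $n$ mod $d$, and let $N_\epsilon$ be the largest degree occurring in class $\epsilon$. Then $s_n\equiv \eta^{(N_\epsilon-n)/d}s_n$ for each such $n$. Therefore $s\equiv \sum_\epsilon t_\epsilon$, where
\[
t_\epsilon=\sum_{n\equiv\epsilon}\eta^{(N_\epsilon-n)/d}s_n\in S_{N_\epsilon}.
\]
This right-hand side is $\Phi$ of the tuple of classes $([t_\epsilon])_\epsilon$.

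\textbf{Step 3: injectivity (the main point).} Since $\Phi$ is graded, it suffices to treat a single $\epsilon$. After multiplying by a power of $\eta$, an element of $C_\epsilon$ is represented by some $s\in S_N$ with $N=\epsilon+r_0d$.

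Suppose $s=(\eta-1)g$ with $g=\sum_n g_n$ a finite sum. Compare components in degrees $n=\epsilon+rd$:
\[
\eta g_{n-d}-g_n=\begin{cases} s, & n=N,\\ 0, & n\neq N,\end{cases}
\]
with $g_n=0$ for $n<0$.

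Induct upward in $r$. First, $g_{\epsilon+rd}=0$ for all $r<r_0$. Next, $g_N=-s$. Finally, $g_{\epsilon+rd}=\eta^{\,r-r_0}g_N=-\eta^{\,r-r_0}s$ for all $r>r_0$.

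Since $g$ has only finitely many nonzero components, $\eta^{\,r-r_0}s=0$ for $r$ large. That says precisely that the class of $s$ in $C_\epsilon$ vanishes. Hence $\Phi$ is injective.

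This componentwise recursion is the only step requiring care. It handles $\eta$-torsion automatically, because an element killed by a power of $\eta$ is zero in the colimit by definition.
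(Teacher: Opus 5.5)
Your proof is correct, and it takes a more hands-on route than the paper's.

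\textbf{The paper's route.} The paper argues through localization. It identifies $\bigoplus_{\epsilon}\colim_r S_{\epsilon+rd}$ with a $\Z/d$-graded version of $S[\eta^{-1}]$: for $0\le\epsilon<d$, the degree-$\epsilon$ piece of $S[\eta^{-1}]$ is exactly $\colim_r S_{\epsilon+rd}\,\eta^{-r}$. It then notes that $S/(\eta-1)\cong S[\eta^{-1}]/(\eta-1)$, because $\eta$ is already invertible once $\eta=1$ is imposed. The remaining identification of $S[\eta^{-1}]/(\eta-1)$ with the sum of the pieces in degrees $0,\dots,d-1$ is left implicit. It holds because $\eta$ is a unit of degree $d$ there.

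\textbf{Your route.} You write down the comparison map directly and put an explicit ring structure on the colimit, where the centrality of $\eta$ is visibly used. You then prove surjectivity and injectivity by hand. Your componentwise recursion $g_{\epsilon+rd}=-\eta^{\,r-r_0}s$ is exactly what the localization argument hides: the $\eta$-torsion of $S$ is what dies both in the colimit and in $S/(\eta-1)$.

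\textbf{Comparison.} The paper's version is shorter and explains conceptually why the answer must be $S/(\eta-1)$. Yours is self-contained, and it checks the multiplicative structure on the colimit that the paper takes for granted. It also makes clear that $\eta$ need not be a nonzerodivisor.

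Two small points you could state explicitly:
\begin{enumerate}
\item Writing $s=(\eta-1)g$ uses that the two-sided ideal generated by the central element $\eta-1$ equals $(\eta-1)S$.
\item When $\epsilon+\epsilon'\ge d$, the product $ss'$ lies in the system for the residue $\epsilon+\epsilon'\bmod d$ only after shifting the index $r$ by one. This is harmless, since the colimit does not depend on the choice of integer representative of $\epsilon$.
\end{enumerate}
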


\begin{proof}
The direct limit is the $\Z/d$-graded version of the homogeneous
localization $S[\eta^{-1}]$.  Since imposing $\eta=1$ already makes
$\eta$ invertible,
\[
S/(\eta-1)
\cong
S[\eta^{-1}]/(\eta-1),
\]
which gives the result.  Equivalently, the isomorphism follows
directly from the universal property of the filtered colimit.
\end{proof}

The $\mcA$-linear map on equivariant Grothendieck groups induced by
$(i_n)_*$ is multiplication by
\[
[\mcO_{D_0}]=\kappa.
\]
Since taking the Grothendieck group $K_0$ commutes with filtered unions of abelian categories of the
form above, Lemma~\ref{lem:gradedcolimit} and equation~\eqref{eq_U_as_quotient} give the following result.

\begin{thm}[Abelian categorification of the integral Cartan algebra]
\label{thm_abelianK0}
There is an isomorphism of $\Z/2$-graded $\mcA$-algebras
\begin{equation}
\label{eq_abelianmain}
K_0(\Coh^T_{\PP})\cong
\mcU^0_{\mcA},
\end{equation}
under which the class represented by
$\mcO_{\PP^n}(-c)$ maps to
\begin{equation}\label{eq_Bcn_two}
B_{c,n}=\begin{bmatrix}K;c\\n\end{bmatrix}.
\end{equation}
The monoidal product in
$\Coh^T_{\PP}$ categorifies multiplication in
$\mcU^0_{\mcA}$.
\end{thm}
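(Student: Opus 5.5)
The proof combines three earlier inputs: the identification $K^T(\PP)\cong R$ of \eqref{eq_iso_R_PP}, the fact that $K_0$ commutes with the filtered colimit, and the algebraic Lemma~\ref{lem:gradedcolimit} together with Corollary~\ref{cor_kernel_psi}.

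\textbf{Step 1: $K_0$ of the colimit.} First I check that $K_0(\Coh^{\epsilon,T}_{\PP})\cong\colim_r K^T(\PP^{\epsilon+2r})$, the colimit taken along the maps $[(i_n)_*]$. By Proposition~\ref{prop_abeliancolimit}, every object, morphism and short exact sequence of $\Coh^{\epsilon,T}_{\PP}$ is represented at some finite stage. Moreover, the functors $\jmath_n$ are exact and fully faithful, so exactness at a finite stage is equivalent to exactness in the colimit. Hence the generators and relations of $K_0$ of the colimit are the filtered union of those at the finite stages. The canonical map from the colimit of Grothendieck groups is therefore surjective, since every object comes from some $\PP^n$. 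It is also injective: a relation in the colimit involves finitely many short exact sequences, all realized on one $\PP^{\epsilon+2r}$. This is the step I expect to need the most care. One must verify that a sequence which becomes exact after stabilization was already exact at a finite stage (true, since $\iota_n$ is exact and faithful, hence reflects exactness). One must also verify that isomorphisms in the colimit between objects from different stages are accounted for by the transition maps, which follows from \eqref{eq_hom_stabilize}.

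\textbf{Step 2: Identify the transition maps and the algebraic colimit.} By \eqref{eq_iota_kappa_abelian}, under $R_n\cong K^T(\PP^n)$ the map $[(i_n)_*]$ is multiplication by $\kappa\in R_2$. Summing over $\epsilon\in\{0,1\}$, we get $K_0(\Coh^T_{\PP})\cong\bigoplus_\epsilon\colim_r(R_{\epsilon+2r}\xrightarrow{\kappa}R_{\epsilon+2r+2})$. The ring $R$ is commutative, so $\kappa$ is central, and Lemma~\ref{lem:gradedcolimit} with $d=2$, $\eta=\kappa$ identifies this with $R/(\kappa-1)$ as a $\Z/2$-graded ring. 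Corollary~\ref{cor_kernel_psi} then gives $R/(\kappa-1)\cong\UzeroA$ via $\psi$.

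\textbf{Step 3: Tracking classes and multiplication.} The class of $\jmath_n(\mcO_{\PP^n}(-c))$ is the image of $K_{c,n}$, and $\psi(K_{c,n})=B_{c,n}$ by \eqref{eq_Kcn_int}. The $\Z/2$-grading on $K_0$ comes from the decomposition $\Coh^{0,T}_{\PP}\oplus\Coh^{1,T}_{\PP}$, and it matches \eqref{eq_U_Z2grading}. For the ring structure: by Proposition~\ref{prop_abelianmonoidal}, $\star$ is biexact, so it descends to $K_0$. By \eqref{eq_jmath_convolution}, it is computed at finite stages by the convolution on $\Coh^T(\PP)$, which induces multiplication in $R$ under \eqref{eq_iso_R_PP}. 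Thus the product on $K_0(\Coh^T_{\PP})$ is the one induced on $R/(\kappa-1)$, i.e.\ multiplication in $\UzeroA$. Finally, the unit $\mcO_{\PP^0}$ maps to $K_{0,0}=1$.
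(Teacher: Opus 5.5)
Your proposal is correct and follows the paper's own argument. The paper likewise observes that $(i_n)_*$ acts on Grothendieck groups as multiplication by $\kappa=[\mcO_{D_0}]$ and that $K_0$ commutes with the filtered colimit, and then applies Lemma~\ref{lem:gradedcolimit} together with \eqref{eq_U_as_quotient}. Your Step~1 spells out why $K_0$ commutes with the filtered colimit (exact, fully faithful transition functors), which the paper only asserts, and your Step~3 makes explicit the tracking of the classes of $\mcO_{\PP^n}(-c)$ and the compatibility of convolution with the ring structure.
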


For $c\in\mathbb Z$ we define
\begin{equation}\label{eq_Bscript_definition}
\mathscr B_{c,n}
:=
\jmath_n\bigl(\mcO_{\PP^n}(-c)\bigr)
\in
\Coh^{\bar{n},T}_{\PP}.
\end{equation}
Under the identification~\eqref{eq_abelianmain}, the class
$[\mathscr B_{c,n}]$ corresponds to
$B_{c,n}$ in \eqref{eq_Bcn_two}, and the object $\mathscr B_{c,n}$ of $\Coh^T_{\PP}$ can be thought of as a categorification of $B_{c,n}$. 
Since the functors $\jmath_n$ are exact and fully faithful, all
isomorphisms, exact sequences, direct-sum decompositions, and
equivariant character twists which hold at a finite stage (in $\Coh^T(\PP^n)$, for some $n$) remain valid
after passing to the colimit.  In particular, the splitting formulas
for the pushforwards under $f_{n,m}$ give corresponding decompositions
in $\Coh^T_{\PP}$.

In particular, let $0\leq c\leq n, 0\leq d\leq m$. The inequalities $-m\leq c-d\leq n
$ hold automatically, so the equivariant splitting formula in~\cite{KhCoherent} for
$(f_{n,m})_*$ applies to
$\mcO_{\PP^n}(-c)\boxtimes\mcO_{\PP^m}(-d).$
Consequently,
\begin{equation}\label{eq_Bscript_product}
\mathscr B_{c,n}\star\mathscr B_{d,m}
\cong
\bigoplus_k
\bbinom{n-c+d}{k-c}
\bbinom{m-d+c}{k-d}\,
\mathscr B_{k,n+m},
\end{equation}
where $\max(c,d)\leq k\leq\min(n+d,m+c)$.
(Recall our notation, where for 
$P(q)=\sum_a p_aq^a\in\mathbb N[q,q^{-1}]$ 
we write $P(q)\mathscr B
:=
\bigoplus_a(q^a\mathscr B)^{\oplus p_a}$). 

Decomposition~\eqref{eq_Bscript_product} categorifies the Guti\'errez-Mart\'inez-Szwej-Wildon relation in $\UzeroA$, see~\cite[Theorem~1.2]{GMSW1}. This is also the relation \eqref{eq_multiplication}, with $K_{a,b}$'s replaced by $B_{a,b}$'s (and see~\cite{KhCoherent} for its categorification in the lift $R$ of $\UzeroA$).

\begin{remark}
Functor $(i_n)_*$ in \eqref{eq_iota_abelian} preserves the hom spaces between the objects but it does not preserve the Ext groups. The latter groups grow large as the functor is iterated to embed objects from $\Coh^T(\PP^n)$ into $\Coh^T(\PP^{n+2N})$ for large $N$. Although these Ext groups are bigraded, by cohomological degree and
$T$-weight, even individual bidegree components can become
infinite-dimensional in the direct limit.  It is nevertheless natural
to expect that a suitable renormalized Euler characteristic of the
finite-stage Ext groups, or equivalently of their stable limit, leads
to a $q$-semilinear form related to $\UzeroA$.  We do not study this
question in the present paper.   
\end{remark}

%%%%%%%%%%%%%%%%%%%%%%
%
%  Cat of comultiplication
%
%%%%%%%%%%%%%%%%%%%%%%

\section{A categorification of the  comultiplication in \texorpdfstring{$\UzeroA$}{U(0,A)}}
\label{sec_abelianDelta}

{\bf The product category.}
For two parities $\epsilon,\epsilon'\in\Z/2$, form the filtered
colimit
\[
\Coh^{\epsilon,\epsilon'}_{\PP\times \PP}
=
\colim_{r,s}
\Coh^T
\left(
\PP^{2r+\epsilon}\times
\PP^{2s+\epsilon'}
\right),
\]
where the transition maps add $D_0$ independently in the first and
second factors.  The torus acts diagonally on each product.
Set
\[
\Coh^T_{\PP\times\PP}
=
\bigoplus_{\epsilon,\epsilon'}
\Coh^{\epsilon,\epsilon'}_{\PP\times \PP}.
\]

Componentwise divisor addition gives the 
abelian category $\Coh^T_{\PP\times\PP}$ a biexact symmetric monoidal structure.
Equivariant K\"unneth gives
\begin{equation}
K_0(\Coh^T_{\PP\times\PP})
\cong
K_0(\Coh^T_{\PP})
\otimes_{\mcA}
K_0(\Coh^T_{\PP})\cong \UzeroA\otimes_{\mcA}\UzeroA.
\label{eq_Kunnethlimit}
\end{equation}

\noindent 
{\bf The diagonal pushforward.}
Let $ \delta_n:\PP^n\hookrightarrow\PP^n\times\PP^n$
be the diagonal.

\begin{definition}
For $\mcF\in\Coh^T(\PP^n)$ set
\begin{equation}
\Delta^\heartsuit_n(\mcF)
=
(\delta_n)_*(\mcF(1)).
\label{eq:Dheart}
\end{equation}
\end{definition}

The functor is exact, since tensoring by $\mcO(1)$ and closed-immersion
pushforward are exact.
The square
\[
\begin{tikzcd}
\PP^n \arrow[r,"\delta_n"] \arrow[d,"i_n"'] &
\PP^n\times\PP^n
\arrow[d,"i_n\times i_n"]\\
\PP^{n+2} \arrow[r,"\delta_{n+2}"] &
\PP^{n+2}\times\PP^{n+2}
\end{tikzcd}
\]
commutes, and $i_n^*\mcO(1)\cong\mcO(1)$.
Hence
\[
\Delta^\heartsuit_{n+2}(i_{n*}\mcF)
\cong
(i_n\times i_n)_*
\Delta^\heartsuit_n(\mcF),
\]
and equation \eqref{eq:Dheart} descends to the colimit.

\begin{thm}
\label{thm_abelianDelta}
There is an exact functor
\[
\Delta^\heartsuit: \Coh^T_{\PP}
\lra \Coh^T_{\PP\times \PP}
\]
which is symmetric monoidal, coassociative and cocommutative.
\end{thm}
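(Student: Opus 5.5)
The functor $\Delta^\heartsuit$ is assembled from the finite-stage functors $\Delta^\heartsuit_n$, and each required structure is checked at a finite stage. Every identity involved is an isomorphism of pushforwards along commuting diagrams of closed immersions and addition maps, and such isomorphisms survive the passage to the filtered colimit.

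\emph{Step 1: construction and exactness.} The commuting square for $\delta_n$, $i_n$ and $i_n\times i_n$, together with $i_n^*\mcO(1)\cong\mcO(1)$, gives isomorphisms $\Delta^\heartsuit_{n+2}\circ\iota_n\cong(i_n\times i_n)_*\circ\Delta^\heartsuit_n$. These are compatible with composites of stabilizations, because the stabilization maps themselves compose and the isomorphisms are built from canonical base-change isomorphisms for pushforward along closed immersions. The universal property of the colimit then yields a functor $\Delta^\heartsuit$ landing in the diagonal parity components $\Coh^{\epsilon,\epsilon}_{\PP\times\PP}$. Each $\Delta^\heartsuit_n$ is exact, and by Proposition~\ref{prop_abeliancolimit} (and its analogue for $\PP\times\PP$) kernels and cokernels are computed at finite stages. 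Hence $\Delta^\heartsuit$ is exact.

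\emph{Step 2: monoidality.} Take $\mcF_1$ on $\PP^n$ and $\mcF_2$ on $\PP^m$. I need a natural isomorphism
\[
(\delta_{n+m})_*\bigl((f_{n,m})_*(\mcF_1\boxtimes\mcF_2)\otimes\mcO(1)\bigr)\cong(f_{n,m}\times f_{n,m})_*\bigl(\text{shuffle of }\Delta^\heartsuit_n\mcF_1\boxtimes\Delta^\heartsuit_m\mcF_2\bigr).
\]
The diagonal commutes with the addition map, i.e.\ $(f\times f)\circ(\delta_n\times\delta_m)=\delta_{n+m}\circ f$ up to the shuffle $\PP^n\times\PP^n\times\PP^m\times\PP^m\cong\PP^n\times\PP^m\times\PP^n\times\PP^m$. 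Both sides are therefore pushforwards from $\PP^n\times\PP^m$ along the same map, and it remains to compare the line bundles. The right side carries $\mcO(1)\boxtimes\mcO(1)$. Projection formula on the left gives $f_{n,m}^*\mcO_{\PP^{n+m}}(1)$. The identity $f_{n,m}^*\mcO(1)\cong\mcO(1)\boxtimes\mcO(1)$ holds because multiplication of binary forms is bilinear, and it holds equivariantly because the characters $[n+1]$ are compatible with the multiplication $\Sym^nV\otimes\Sym^mV\to\Sym^{n+m}V$, so no extra $q$-twist appears. The unit $\mcO_{\PP^0}$ goes to $\mcO_{\PP^0\times\PP^0}$, since $\mcO(1)$ is trivial on a point. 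Compatibility with the stabilizations~\eqref{eq_multstabilization} follows from the same diagram chase with $D_0$ added, using $f\circ(i\times\Id)=i\circ f$.

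\emph{Step 3: coassociativity and cocommutativity.} Cocommutativity is immediate, because the swap $\sigma$ of $\PP^n\times\PP^n$ satisfies $\sigma\circ\delta_n=\delta_n$. For coassociativity, both $(\Delta^\heartsuit\times\Id)\circ\Delta^\heartsuit$ and $(\Id\times\Delta^\heartsuit)\circ\Delta^\heartsuit$ are naturally isomorphic to $(\delta^{(3)}_n)_*(\mcF(1))$, where $\delta^{(3)}_n$ is the triple diagonal. This uses the fact that the second application of $\Delta^\heartsuit$ is taken in the appropriate sense on the product category, as is standard for coproducts.

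\emph{Main obstacle.} The normalization of the twist by $\mcO(1)$ in coassociativity is the step most likely to fail. Applied naively twice, the construction produces $\mcF(2)$ on the triple diagonal rather than $\mcF(1)$. I expect this step to be the hard one. To get coassociativity on the nose, I would interpret the iterated coproduct as extending $\Delta^\heartsuit$ to $\Coh^T_{\PP\times\PP}$ via $(\delta\times\Id)_*$ of $\mcG\otimes(\mcO(1)\boxtimes\mcO)$, applied symmetrically. I would then check that both bracketings give the triple diagonal pushforward of $\mcF\otimes\mcO(2)$. If a canonical identification is only available up to such twists, I would fall back to proving coassociativity up to the natural isomorphism induced by the canonical isomorphisms $\delta^*(\mcO(1)\boxtimes\mcO(1))\cong\mcO(2)$, and check the pentagon-type coherences at a finite stage.
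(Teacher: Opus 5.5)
Your proposal is correct and follows the paper's proof. Monoidality comes from $\delta_{n+m}\circ f_{n,m}=(f_{n,m}\times f_{n,m})\circ(\delta_n\times\delta_m)$ together with the projection formula and $f_{n,m}^*\mcO(1)\cong\mcO(1)\boxtimes\mcO(1)$. Cocommutativity comes from the swap-invariance of the diagonal, and coassociativity from identifying both bracketings with a pushforward to the small diagonal.

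One correction to Step 3: the common value is $(\delta^{(3)}_n)_*(\mcF(2))$, not $(\delta^{(3)}_n)_*(\mcF(1))$, which is what the paper states. The doubled twist is not an obstacle. Coassociativity only requires the two bracketings to be naturally isomorphic, and your proposed fix, using $(\delta_n\times\Id)_*\bigl(-\otimes(\mcO(1)\boxtimes\mcO)\bigr)$ and its mirror, already establishes exactly that.
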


\begin{proof}
Let $ \mcF_1\in\Coh^T(\PP^n)$,
$\mcF_2\in\Coh^T(\PP^m)$.
The identity
\[
\delta_{n+m}\circ f_{n,m}
=
(f_{n,m}\times f_{n,m})
\circ(\delta_n\times\delta_m)
\]
gives
\begin{align*}
\Delta^\heartsuit_n(\mcF_1)
\star
\Delta^\heartsuit_m(\mcF_2)
&\cong
(\delta_{n+m})_*
(f_{n,m})_*
\bigl(\mcF_1(1)\boxtimes \mcF_2(1)\bigr)\\
&\cong
(\delta_{n+m})_*
\bigl((\mcF_1\star \mcF_2)(1)\bigr),
\end{align*}
where the second isomorphism uses $
f_{n,m}^*\mcO(1)\cong\mcO(1,1)$ 
and the projection formula.
This proves monoidality.

Iterating the construction gives natural isomorphisms between the sheaves 
\[
(\Delta^\heartsuit\boxtimes\Id)\Delta^\heartsuit(\mcF)\cong (\Id\boxtimes\Delta^\heartsuit)\Delta^\heartsuit(\mcF)\cong 
(\delta_n^{(3)})_*(\mcF(2)),
\]
where $\delta_n^{(3)}:\PP^n\to(\PP^n)^3$ is the small diagonal.
This gives coassociativity.
Cocommutativity follows from invariance of the diagonal under the
transposition of the two factors.
\end{proof}

The exact functor $\Delta^\heartsuit$ induces an $\mcA$-linear map
\begin{equation}\label{eq_Delta_C}
\Delta_{\mathcal C}
:=
K_0(\Delta^\heartsuit):
K_0(\Coh^T_{\PP})
\longrightarrow
K_0(\Coh^T_{\PP\times\PP})
\cong
K_0(\Coh^T_{\PP})\otimes_{\mcA}K_0(\Coh^T_{\PP}).
\end{equation}
Since $\Delta^\heartsuit$ is monoidal, $\Delta_{\mathcal C}$ is an
algebra homomorphism. Let $\Delta_{\mcU}$ denote the standard Cartan coproduct 
\begin{equation}\label{eq_Delta_U}
\Delta_{\mcU} \ : \ \UzeroA \lra \UzeroA\otimes_{\mcA} \UzeroA, \ \ 
\Delta_{\mcU}(K) = K\otimes K. 
\end{equation}

\vspace{0.07in}

\noindent 
{\bf Comparison with the standard Cartan coproduct.}
Let
\[
\Psi:
K_0(\Coh^T_{\PP})
\xrightarrow{\sim}
\UzeroA 
\]
denote the isomorphism in
Theorem~\ref{thm_abelianK0}, and $\Psi\otimes \Psi$ denote the one in \eqref{eq_Kunnethlimit}.
Define the parity-twisted isomorphism
\begin{equation}
\Phi([\mcF])
=
(-1)^n\Psi([\mcF])
\qquad
\text{if }\mcF\text{ is represented on }\PP^n.
\label{eq:Phi}
\end{equation}
This is well-defined because stabilization changes $n$ by two, and
it is a ring isomorphism because degrees add under convolution. Recall our notation $B_{c,n}$ from  \eqref{eq_Bcn_two} and 
\[ A_0 = K_{0,1}=[\mcO_{\PP^1}], \ \ 
A_1 = K_{1,1}=[\mcO_{\PP^1}(-1)] 
\]
 from~\eqref{eq_A01}. We continue to denote by $A_0,A_1$ their images in
$K_0(\Coh^T_{\PP})$ under $K_0(\jmath_1)$.  We have 
\[
B_{0,1}=\Psi(A_0),\qquad B_{1,1}=\Psi(A_1).
\]

\begin{prop}
\label{prop_degreeoneDelta}
We have the following identities for $\PP^1$: 
\begin{align}
(\Psi\otimes\Psi)[\Delta^\heartsuit_1(\mcO)]
&= (\Psi\otimes\Psi)\Delta_{\mcC}(A_0)=[2]\,B_{0,1}\otimes B_{0,1} -B_{1,1}\otimes B_{0,1}
-B_{0,1}\otimes B_{1,1},
\label{eq_DA0}
\\
(\Psi\otimes\Psi)[\Delta^\heartsuit_1(\mcO(-1))]
&=(\Psi\otimes\Psi)\Delta_{\mcC}(A_1)=
B_{0,1}\otimes B_{0,1}-B_{1,1}\otimes B_{1,1}.
\label{eq_DA1}
\end{align}
These are the negatives of the standard
Cartan coproducts of $B_{0,1}$ and $B_{1,1}$.
\end{prop}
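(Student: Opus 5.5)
The plan is to compute both classes directly on $\PP^1\times\PP^1$ from the equivariant Koszul resolution of the diagonal. Then we compare the result with $\Delta_{\mcU}$ on $B_{0,1}=[K;0]$ and $B_{1,1}=[K;1]$.

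\textbf{Step 1: the class of the diagonal.} The diagonal $\delta_1(\PP^1)\subset\PP^1\times\PP^1$ is the zero locus of the section
\[
s=x_+\otimes x_- - x_-\otimes x_+ \in H^0(\mcO(1,1)).
\]
Since $\mathrm{wt}(x_+)=-1$ and $\mathrm{wt}(x_-)=1$, the section $s$ has weight $0$. Hence there is an exact sequence of $T$-equivariant sheaves
\[
0\lra \mcO(-1)\boxtimes\mcO(-1)\stackrel{s}{\lra}\mcO\boxtimes\mcO\lra(\delta_1)_*\mcO\lra 0
\]
with no character twist. I would double-check this weight bookkeeping first, since a stray $q^{\pm1}$ here would spoil both formulas.

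\textbf{Step 2: pass to $K$-theory.} By the Künneth identification \eqref{eq_Kunnethlimit}, and since $K_{c,n}\mapsto[\mcO(-c)]$, Step 1 gives
\[
[(\delta_1)_*\mcO]=A_0\otimes A_0-A_1\otimes A_1 .
\]
This is $[\Delta^\heartsuit_1(\mcO(-1))]$, which proves \eqref{eq_DA1} after applying $\Psi\otimes\Psi$.

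For \eqref{eq_DA0}, tensor the sequence with $\mcO(1)\boxtimes\mcO$. By the projection formula this is harmless, because $\delta_1^*(\mcO(1)\boxtimes\mcO)=\mcO(1)$. The result is
\[
0\lra \mcO\boxtimes\mcO(-1)\lra\mcO(1)\boxtimes\mcO\lra(\delta_1)_*\mcO(1)\lra0 .
\]
Taking classes gives $[\Delta^\heartsuit_1(\mcO)]=K_{-1,1}\otimes A_0-A_0\otimes A_1$. By \eqref{eq_Kc1} we have $K_{-1,1}=[-1]A_1-[-2]A_0=[2]A_0-A_1$. Substituting yields \eqref{eq_DA0}. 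The equalities with $\Delta_{\mcC}(A_i)$ are immediate, because $\Delta_{\mcC}=K_0(\Delta^\heartsuit)$ and $A_i$ is the class of $\jmath_1$ of the corresponding sheaf.

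\textbf{Step 3: compare with $\Delta_{\mcU}$.} Use \eqref{eq_K_and_inverse}, which gives
\[
K=B_{1,1}-q^{-1}B_{0,1},\qquad K^{-1}=B_{1,1}-qB_{0,1}.
\]
Expand
\[
\Delta_{\mcU}(B_{0,1})=\frac{K\otimes K-K^{-1}\otimes K^{-1}}{q-q^{-1}} .
\]
The $B_{1,1}\otimes B_{1,1}$ terms cancel. The cross terms contribute $(q-q^{-1})(B_{0,1}\otimes B_{1,1}+B_{1,1}\otimes B_{0,1})$, and the $B_{0,1}\otimes B_{0,1}$ term has coefficient $q^{-2}-q^{2}$. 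Dividing by $q-q^{-1}$ gives
\[
\Delta_{\mcU}(B_{0,1})=B_{0,1}\otimes B_{1,1}+B_{1,1}\otimes B_{0,1}-[2]B_{0,1}\otimes B_{0,1},
\]
which is the negative of \eqref{eq_DA0}.

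The same expansion for
\[
\Delta_{\mcU}(B_{1,1})=\frac{qK\otimes K-q^{-1}K^{-1}\otimes K^{-1}}{q-q^{-1}}
\]
gives $B_{1,1}\otimes B_{1,1}-B_{0,1}\otimes B_{0,1}$. Here the mixed terms cancel, and the coefficients $(q-q^{-1})$ and $(q^{-1}-q)$ of the two square terms divide out. This is the negative of \eqref{eq_DA1}.

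The main obstacle is only the equivariant weight of $s$ and the identification $[\mcO(1)]=K_{-1,1}$. The latter is fixed by the chosen linearization of $\mcO(1)$ and is consistent with \eqref{eq_fixedplus}. The remaining steps are routine algebra.
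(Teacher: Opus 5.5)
Your proof is correct and follows the paper's argument. You use the equivariant resolution $0\to\mcO(-1,-1)\to\mcO\to(\delta_1)_*\mcO\to 0$, where your weight-$0$ check on $s$ justifies the absence of a character twist, a point the paper leaves implicit. You then twist it by $\mcO(1,0)$ with $[\mcO(1)]=[2]A_0-A_1$, and expand $\Delta_{\mcU}$ via $K=B_{1,1}-q^{-1}B_{0,1}$ and $K^{-1}=B_{1,1}-qB_{0,1}$. The only cosmetic difference is that the paper writes $\Delta_{\mcU}(B_{0,1})=K\otimes B_{0,1}+B_{0,1}\otimes K^{-1}$ directly and so avoids your division by $q-q^{-1}$; that division is harmless anyway, since the tensor product of free $\mcA$-modules is torsion-free.
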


\begin{proof}
The diagonal in $\PP^1\times\PP^1$ has the equivariant divisor
resolution
\[
0\lra \mcO(-1,-1) \lra
\mcO \lra \mcO_\Delta \lra 0.
\]
Hence
\[
[\Delta^\heartsuit_1(\mcO(-1))]
=[\delta_*\mcO]=
A_0\otimes A_0-A_1\otimes A_1.
\]
Tensoring the resolution by $\mcO(1,0)$ and using
$ [\mcO_{\PP^1}(1)]=[2]A_0-A_1$
gives
\[
[\Delta^\heartsuit_1(\mcO)]
=
[2]A_0\otimes A_0
-A_1\otimes A_0
-A_0\otimes A_1.
\]
Applying $\Psi\otimes \Psi$ gives \eqref{eq_DA1} and \eqref{eq_DA0}, respectively. 
On the other hand,  
\[
K=B_{1,1}-q^{-1}B_{0,1},
\qquad
K^{-1}=B_{1,1}-qB_{0,1}.
\]
Therefore, the standard comultiplication $\Delta_{\mcU}$ in $\UzeroA$ satisfies 
\begin{align*}
\Delta_{\mcU}(B_{0,1})
&=
K\otimes B_{0,1}
+
B_{0,1}\otimes K^{-1}\\
&=
B_{1,1}\otimes B_{0,1}+B_{0,1}\otimes B_{1,1}-[2]B_{0,1}\otimes B_{0,1},
\end{align*}
which is the negative of \eqref{eq_DA0}.
Similarly,
\[
\Delta_{\mcU}(B_{1,1})
=
B_{1,1}\otimes B_{1,1}-B_{0,1}\otimes B_{0,1},
\]
the negative of \eqref{eq_DA1}.
\end{proof}
Recall that $\Delta_{\mcC}=K_0(\Delta^{\heartsuit})=[\Delta^{\heartsuit}]$, see \eqref{eq_Delta_C}. 

\begin{thm}[Abelian categorification of the coproduct]
\label{thm:coproductK0}
Under the parity-twisted isomorphism \eqref{eq:Phi},
the exact functor $\Delta^\heartsuit$ categorifies the standard
Cartan coproduct:
\begin{equation}
(\Phi\otimes\Phi)\,
\Delta_{\mcC}
=
\Delta_{\mcU}\,
\Phi.
\label{eq:coproductintertwine}
\end{equation}
In particular, one recovers the formula 
\begin{equation}
\Delta_{\mcU}
\begin{bmatrix}K;c\\n\end{bmatrix}
=
\sum_{r+s=n}
q^{cs}
K^s
\begin{bmatrix}K;c\\r\end{bmatrix}
\otimes
K^{-r}
\begin{bmatrix}K\\s\end{bmatrix}.
\label{eq:CartanDelta}
\end{equation}
\end{thm}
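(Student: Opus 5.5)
Both sides of \eqref{eq:coproductintertwine} are $\mcA$-algebra homomorphisms $K_0(\Coh^T_{\PP})\to\UzeroA\otimes_{\mcA}\UzeroA$. On the left, $\Delta_{\mcC}$ is multiplicative because $\Delta^\heartsuit$ is monoidal (Theorem~\ref{thm_abelianDelta}), and $\Phi\otimes\Phi$ is a ring isomorphism. On the right, $\Delta_{\mcU}$ and $\Phi$ are multiplicative. So the plan is to reduce to generators and check them using Proposition~\ref{prop_degreeoneDelta}.

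The complication is that $\UzeroA$ is not generated by $B_{0,1},B_{1,1}$ over $\mcA$. I would therefore work in the Rees/ungraded setting. The classes $[\mcF]$ with $\mcF$ on $\PP^n$ come from $K^T(\PP)\cong R$, so $K_0(\Coh^T_{\PP})$ is the image of $R$ under $\psi$, via Lemma~\ref{lem:gradedcolimit} and Corollary~\ref{cor_kernel_psi}. It then suffices to prove that the two composites $R\to\UzeroA\otimes\UzeroA$, namely $(\Phi\otimes\Phi)\Delta_{\mcC}\jmath$ and $\Delta_{\mcU}\Phi\jmath$, agree. Both are ring homomorphisms out of $R$. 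Since $R\subset R_{\Q(q)}=\Q(q)[A_0,A_1]$ by Lemma~\ref{lem:rationalpolynomial}, and the target $\UzeroA\otimes_{\mcA}\UzeroA$ is $\mcA$-free (it has the tensor-square basis of \eqref{eq_basis_U}), hence torsion-free, I may extend scalars to $\Q(q)$. There both maps are determined by their values on $A_0$ and $A_1$.

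On degree one generators, $\Phi(A_i)=-\Psi(A_i)=-B_{i,1}$, and $(\Phi\otimes\Phi)$ acts on a class coming from $\PP^1\times\PP^1$ as $(+1)\,\Psi\otimes\Psi$, because the two signs multiply. By Proposition~\ref{prop_degreeoneDelta}, $(\Psi\otimes\Psi)\Delta_{\mcC}(A_i)=-\Delta_{\mcU}(B_{i,1})=\Delta_{\mcU}(-B_{i,1})=\Delta_{\mcU}\Phi(A_i)$. This is exactly the required identity in degree one. Multiplicativity together with the polynomial generation over $\Q(q)$ then yields \eqref{eq:coproductintertwine}. One should double-check that the Künneth identification \eqref{eq_Kunnethlimit} is compatible with the convolution products on both sides; this holds because both are induced by external products and pushforward along $f\times f$.

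For \eqref{eq:CartanDelta}, I would simply note that $\Delta_{\mcU}$ is the algebra map with $K\mapsto K\otimes K$. I would then compute $\Delta_{\mcU}[K;c-j]=q^{c-j}K\otimes K-q^{-(c-j)}K^{-1}\otimes K^{-1}$, over $(q-q^{-1})$. The expansion of the product over $j$ into the stated sum is a $q$-Vandermonde-type identity. The cleanest route is to verify it after specializing $K=q^x$, $K\otimes 1=q^{x}$, $1\otimes K=q^{y}$ on both sides, where it becomes the known identity $\bbinom{x+y+c}{n}=\sum_{r+s=n}q^{cs+xs-yr}\bbinom{x+c}{r}\bbinom{y}{s}$, and polynomial identities in $q^x,q^y$ holding for infinitely many integers suffice. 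The main obstacle I expect is the bookkeeping of this $q$-Vandermonde exponent and the sign/twist conventions, rather than the categorical part; if convenient, one could instead derive it categorically from a filtration of $\delta_*\mcO(-c)$ on $\PP^n\times\PP^n$, but I would first try the specialization argument.
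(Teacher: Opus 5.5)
Your proposal is correct and follows essentially the same route as the paper. Both sides are ring homomorphisms, and agreement on $A_0,A_1$ comes from Proposition~\ref{prop_degreeoneDelta}, using the sign $(-1)^{1+1}=1$ for classes on $\PP^1\times\PP^1$. Polynomial generation over $\Q(q)$ (Lemma~\ref{lem:rationalpolynomial}) together with torsion-freeness of $\UzeroA\otimes_{\mcA}\UzeroA$ then gives the integral equality, and \eqref{eq:CartanDelta} reduces to the balanced $q$-Vandermonde identity. Your specialization $K\otimes 1\mapsto q^x$, $1\otimes K\mapsto q^y$ is just a concrete way of invoking that identity; it needs $x$ and $y$ to range over a product of infinite sets, for example all sufficiently large integers, which is available.
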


\begin{proof}
Both $K_0(\Delta^\heartsuit)$ and the standard Cartan coproduct $\Delta_{\mcU}$ are
ring homomorphisms.
By Lemma~\ref{lem:rationalpolynomial}, after extension to $\Q(q)$
the algebra $R$, and hence its quotient
\[
K_0(\Coh^T_{\PP})\cong R/(\kappa-1),
\]
is generated by the images of the degree one elements $A_0,A_1$.
Proposition~\ref{prop_degreeoneDelta} shows that on each degree one
generator the geometric map is the negative of the standard coproduct.
Hence, if $x\in R_n$ is homogeneous and $\bar{x}$ denotes its image in
$K_0(\Coh^T_{\PP})$, then
\[
(\Psi\otimes\Psi)
K_0(\Delta^\heartsuit)(\bar{x})
=
(-1)^n
\Delta_{\mcU}(\Psi(\bar{x})).
\]
The target is $\mcA$-torsion-free, so the equality holds integrally.
Inserting the parity factors from \eqref{eq:Phi} gives
\eqref{eq:coproductintertwine}.
Formula \eqref{eq:CartanDelta} follows from
\[
\Delta_{\mcU}([K;c])
=
q^cK\otimes[K;0]+[K;c]\otimes K^{-1}
\]
and the balanced $q$-Vandermonde identity; see
\cite[Chapter~3]{LusztigBook} for the Hopf structure on the quantum group
and compare \cite[Corollary~2.12]{GMSW1} for $q$-Vandermonde.
\end{proof}

Note that $\Delta^{\heartsuit}$ is a functor, $\Delta_{\mcC}$ its decategorification, and $\Delta_{\mcU}$ the standard coproduct on $\UzeroA$ being categorified. 

\begin{remark}
The abelian construction above gives exact multiplication and
comultiplication.  The natural counit is no longer an exact functor
between abelian categories, but it has a simple derived form.  At a
finite stage $n$ set
\[
\varepsilon_n^{\mathrm{der}}(\mcF)
=
R\Gamma(\PP^n,\mcF(-1)).
\]
The projection formula and $i_n^*\mcO(-1)\cong\mcO(-1)$ give canonical
isomorphisms
\[
\varepsilon_{n+2}^{\mathrm{der}}((i_n)_*\mcF)
\cong
\varepsilon_n^{\mathrm{der}}(\mcF),
\]
so these functors are compatible with stabilization.  Moreover,
\[
(\varepsilon^{\mathrm{der}}\boxtimes\id)\Delta^\heartsuit
\cong\id
\cong
(\id\boxtimes\varepsilon^{\mathrm{der}})\Delta^\heartsuit.
\]
Consequently, the growth of higher Ext groups under the embeddings $i_n$ does
not obstruct the derived counit.  
For the distinguished objects $\mathscr B_{c,n}
=\jmath_n(\mcO_{\PP^n}(-c))$
we have
\begin{equation}\label{eq_counit_Bcn}
\varepsilon^{\mathrm{der}}(\mathscr B_{c,n})
=
R\Gamma\bigl(\PP^n,\mcO_{\PP^n}(-c-1)\bigr).
\end{equation}
In particular, in the range $0\leq c\leq n$,
\begin{equation}\label{eq_counit_Bcn_range}
\varepsilon^{\mathrm{der}}(\mathscr B_{c,n})
\cong
\begin{cases}
0, & 0\leq c<n,\\[1mm]
\kk[-n], & c=n.
\end{cases}
\end{equation}
Here $\kk$ carries the trivial $T$-action.
More generally, for any $c\in\mathbb Z$,
\begin{equation}\label{eq_counit_Bcn_general}
\varepsilon^{\mathrm{der}}(\mathscr B_{c,n})
\cong
\begin{cases}
H^0\bigl(\PP^n,\mcO_{\PP^n}(-c-1)\bigr),
& c\leq -1,\\[1mm]
0,
& 0\leq c<n,\\[1mm]
H^0\bigl(\PP^n,\mcO_{\PP^n}(c-n)\bigr)^\vee[-n],
& c\geq n.
\end{cases}
\end{equation}
For the standard counit
\[
\varepsilon_{\mathcal U}:\UzeroA\longrightarrow \mcA,
\qquad
\varepsilon_{\mathcal U}(K)=1,
\]
one has, for all $c\in\mathbb Z$ and $n\geq 0$,
\begin{equation}\label{eq_counit_U_Bcn}
\varepsilon_{\mathcal U}(B_{c,n})
=\varepsilon_{\mathcal U}\left(
\begin{bmatrix}K;c\\n\end{bmatrix}
\right)
=\bbinom{c}{n},
\end{equation}
\end{remark}
using the standard extension of balanced $q$-binomials to all $c\in \Z$. 

%%%%%%%%%%%%%%%%%
%
% Nonequivariant sheaves
%
%%%%%%%%%%%%%%%%%

\section{Nonequivariant sheaves and a categorification of the ring of integer-valued polynomials}
\label{sec_noneq}

There is a classical, nonequivariant shadow of our construction which
connects it with the ring of integer-valued polynomials~\cite{CC} and its spectrum (the Hilbert additive group scheme~\cite{HM}).  After forgetting
the $T$-equivariance, the convolution algebra
\[
\oplusop{n\geq 0} K_0(\Coh(\PP^n))
\]
identifies with the Rees algebra of the degree filtration on the ring
\[
\Int(\Z)
=
\{f(x)\in\Q[x]\mid f(\Z)\subseteq\Z\}
\]
of integer-valued polynomials, via
\[
[\mcO_{\PP^n}(-c)]
\longmapsto
t^n\binom{x+c}{n}.
\]
The Hopf algebra $\Int(\Z)$ is the coordinate ring of the Hilbert
additive group scheme, and its degree filtration and Rees algebra
are studied, in particular, in \cite{HM}.  Thus, the
nonequivariant decategorification of the construction in the present paper and in~\cite{KhCoherent} recovers the Hopf algebra of integer-valued polynomials and its Rees algebra.  What is new
in the present setting is the realization of this structure by the
abelian categories $\Coh(\PP^n)$, convolution along the divisor-addition
maps, and their stabilized colimit; moreover, the $T$-equivariant
construction gives a $q$-deformation whose stabilized
Grothendieck ring is Lusztig's integral Cartan form
$\UzeroA$. 

\vspace{0.07in}

We now briefly describe the nonequivariant specialization of the
construction and its relation with $\Int(\Z)$.
Using the notation from~\cite{KhCoherent}, let 
\[
\wtR =
\bigoplus_{n\geq0}K_0\bigl(\Coh(\PP^n)\bigr)
\]
be the direct sum of Grothendieck groups of categories of (nonequivariant) coherent sheaves on $\PP^n$, over all $n$. 
The multiplication on $\wtR$ is induced, as before, by the pushforwards for the finite divisor-addition maps  $f_{n,m}$. 
Writing $\wtK_{c,n}=[\mcO_{\PP^n}(-c)]$, 
the multiplication formula is obtained from the equivariant one by
specializing $q=1$:
\[
\wtK_{c,n}\wtK_{b,m}
=
\sum_k
\binom{n-c+b}{k-c}
\binom{m-b+c}{k-b}
\wtK_{k,n+m}.
\]
Recall the ring of integer-valued polynomials 
\[
\Int(\Z) = \{f(x)\in\Q[x]\mid f(\Z)\subseteq\Z\},
\]
and let $F_n\Int(\Z)$ be the subgroup of elements of degree at most
$n$.  Form the Rees algebra: 
\[
\operatorname{Rees}_{\deg}\Int(\Z) =
\bigoplus_{n\geq0} t^nF_n\Int(\Z) \subseteq
\Int(\Z)[t].
\]
There is then a graded algebra isomorphism
\begin{equation}\label{eq_q1-Rees-identification}
\wtR
\overset{\cong}{\longrightarrow}
\operatorname{Rees}_{\deg}\Int(\Z),
\qquad \wtK_{c,n} \longmapsto t^n\binom{x+c}{n}.
\end{equation}
Indeed, after setting $X=tx$ one obtains
\[
t^n\binom{x+c}{n} =
\frac{(X+ct)(X+(c-1)t)\cdots(X+(c-n+1)t)}{n!}.
\]
For $c=0$ this is
\[
\frac{X(X-t)\cdots(X-(n-1)t)}{n!},
\]
the standard generator of the Rees deformation of the degree
filtration on $\Int(\Z)$.  This filtration and its Rees algebra
appear in the study of the Hilbert additive group scheme~\cite{HM}.

There are two natural stabilizations in the nonequivariant setting.
First, after choosing a point $p\in\PP^1$, one may use the one-step
embeddings
\[
i_n\colon\PP^n\hookrightarrow\PP^{n+1},
\qquad
D\longmapsto D+p.
\]
The corresponding transition functor is $(i_n)_*$, and on
Grothendieck groups the transition is multiplication by $[\mcO_p] =\wtK_{0,1}-\wtK_{1,1}$. 
Under \eqref{eq_q1-Rees-identification},
\[
\wtK_{0,1}\longmapsto tx, \qquad
\wtK_{1,1}\longmapsto t(x+1),
\]
hence
\[
[\mcO_p]\longmapsto -t.
\]
Since multiplication by $[\mcO_p]$ becomes the identity in the
colimit, the one-step stabilization imposes $t=-1$.  Therefore
\begin{equation}\label{eq_q1-one-step}
K_0\left(
\colim_n\Coh(\PP^n)
\right)
\cong
\Int(\Z).
\end{equation}
With the normalization
\[
[\jmath_n\mcO_{\PP^n}(-c)]
\longmapsto
(-1)^n\binom{x+c}{n},
\]
the coproduct induced by the diagonal functor is the standard
binomial coproduct
\begin{equation}\label{eq:q1-binomial-coproduct}
\Delta\binom{x}{n}
=
\sum_{r+s=n}
\binom{x}{r}\otimes\binom{x}{s}.
\end{equation}
Equivalently, $x$ is primitive:
\[
\Delta(x)=x\otimes1+1\otimes x,
\qquad
\epsilon(x)=0,
\qquad
S(x)=-x.
\]
Consequently, the symmetric monoidal category 
\[
\Coh(\PP)\ :=\ \oplusop{n\ge 0} \Coh(\PP^n),
\]
with the tensor product given by the exact functor $f_{\ast}$, categorifies the Rees algebra $\operatorname{Rees}_{\deg}\Int(\Z)$.

The one-step nonequivariant colimit category $\Coh_{\PP}=\colim_n\Coh(\PP^n)$ (nonequivariant one-step $\PP^n\subset \PP^{n+1}\subset \dots$ analogue of the category in Definition~\ref{eq_def_colimit_cat}) categorifies the bialgebra of integer-valued polynomials $\Int(\Z)$ (which is also the coordinate ring of the Hilbert additive group scheme
$\mathbb H=\mathrm{Spec}(\Int(\Z))$), with $K_0(\Coh_{\PP})\cong\Int(\Z)$.

\begin{remark}
A different, semisimple, categorification of $\Int(\Z)$ was found by Harman, Snowden, and Snyder in~\cite{HSS}. That categorification is given by the \emph{positive Delannoy category}, the subcategory of their Delannoy category where the simple objects are restricted to $L_{\bullet^n}$, over all $n\ge 0$. The positive Delannoy category is also discussed in the forthcoming paper~\cite{KSDel}. 
\end{remark}

One may instead keep the two-step stabilization used in the
equivariant construction.  Choosing two points $p_+,p_-\in\PP^1$, let
\[
D_0=p_++p_-,
\qquad
i_n(D)=D+D_0.
\]
The transition is multiplication by
\[
[\mcO_{D_0}]
=
[\mcO_{p_+}]\star[\mcO_{p_-}],
\]
which corresponds under \eqref{eq_q1-Rees-identification} to $t^2$.
Hence the two-step colimit imposes
\[
t^2=1.
\]
The resulting Hopf algebra is
\begin{equation}\label{eq:q1-two-step}
\Int(\Z)\otimes_{\Z}\Z[g]/(g^2-1),
\end{equation}
where $g$ records the residual parity.
Its bialgebra structure is then categorified by the nonequivariant version of the category $\Coh^T_{\PP}$ and the functors described in Sections~\ref{sec_colimit_cat} and~\ref{sec_abelianDelta}. 

On the categorical level, the group-like element $g$ is replaced by the pair of objects 
\[
\mathscr G_+
=
\jmath_1(\mcO_{p_+}),
\qquad
\mathscr G_-
=
\jmath_1(\mcO_{p_-})
\]
in the odd summand of the two-step colimit category. Since
\[
\mathscr G_+\star\mathscr G_-
\cong
\jmath_2(\mcO_{p_++p_-})
\cong\mathbf 1,
\]
the objects $\mathscr G_+$ and $\mathscr G_-$ are mutually inverse
with respect to convolution.  Consequently,  convolution with either
one exchanges the even and odd summands of the colimit category. Moreover,
$[\mathscr G_+]=[\mathscr G_-]$
in the nonequivariant Grothendieck group, since the classes of all points of $\PP^1$ coincide.  Under the parity-twisted identification
with \eqref{eq:q1-two-step}, their common class is $g$.  Hence, 
$g^2=1$ is categorified by  $\mathscr G_+$ and
$\mathscr G_-$ being mutually inverse objects with the same
Grothendieck class.  Furthermore,
\[
\Delta^\heartsuit(\mathscr G_\pm)
\cong
\mathscr G_\pm\boxtimes\mathscr G_\pm,
\]
categorifying the group-like identity
$\Delta(g)=g\otimes g$.

\begin{remark}\label{rem_Moonen-Polishchuk}
The use of symmetric powers together with convolution by divisor
addition has a close antecedent in the work of Moonen--Polishchuk
\cite{MP}.  For a family of smooth curves
$C\to S$, they consider the relative symmetric powers
$C^{[n]}=\Sym_S^n(C)$ 
and equip the sum of relative Chow homology groups 
\[
\bigoplus_{n\geq 0}\mathrm{CH}_*(C^{[n]}/S)
\]
with the Pontryagin product defined by proper pushforward along the
addition maps
\[
C^{[n]}\times_S C^{[m]}
\longrightarrow C^{[n+m]},
\qquad
(D,D')\longmapsto D+D'.
\]
The geometric correspondence underlying Moonen-Polishchuk's multiplication is
the same one that underlies the convolution product in the present
paper.  Our construction may be viewed as a coherent sheaf
categorical refinement of this general symmetric power convolution
picture in the special case $C=\PP^1$: instead of applying Chow
homology to the symmetric powers, we use the categories
$\Coh^T(\PP^n)$ themselves and the exact pushforward functors along
the addition maps.  The resulting Grothendieck ring and its balanced
stabilization are consequently different from the Chow-theoretic
Pontryagin ring studied in \cite{MP}; in particular,
the $T$-equivariant structure is what leads here to Lusztig's
integral Cartan algebra.
\end{remark}

\printbibliography
%%%%%%%%%%%%%%%%%%%%%%%%%%%%%%%%%
%\bibliographystyle{amsplain}
%\bibliography{biblio}

\vspace{2em} \noindent
Department of Mathematics, Johns Hopkins University, Baltimore, MD 21218, USA\\
\textit{E-mail address:} \href{mailto:khovanov@jhu.edu}{khovanov@jhu.edu}

\end{document}